\colorlet{darkblue}{blue!55!black}
\colorlet{darkcyan}{cyan!50!black}
\colorlet{darkgreen}{green!60!black}
\def\eqref#1{\textcolor{darkblue}{(\ref{#1})}}
\crefname{hypothesis}{hypothesis}{hypotheses}
\Crefname{hypothesis}{Hypothesis}{Hypotheses}
\let\oldequation\equation
\let\oldendequation\endequation
\let\expandafter\oldequationstar\csname equation*\endcsname
\let\expandafter\oldendequationstar\csname endequation*\endcsname
\renewenvironment{equation*}{\linenomathNonumbers\oldequationstar}{\oldendequationstar\endlinenomath}
\let\oldalign\align
\let\oldendalign\endalign
\let\expandafter\oldalignstar\csname align*\endcsname
\let\expandafter\oldendalignstar\csname endalign*\endcsname
\renewenvironment{align*}{\linenomathNonumbers\oldalignstar}{\oldendalignstar\endlinenomath}
\newcounter{intro}
\newcounter{HypCounter}
\newtheorem{introthm}[intro]{Theorem}
\newtheorem{introcor}[intro]{Corollary}
\theoremstyle{plain}
\newtheorem{theorem}{Theorem}[section]
\newtheorem{lemma}[theorem]{Lemma}
\newtheorem{corollary}[theorem]{Corollary}
\theoremstyle{definition}
\newtheorem{definition}[theorem]{Definition}
\newtheorem{example}[theorem]{Example}
\newtheorem*{hypothesis*}{Hypothesis}
\newtheorem{notation}[theorem]{Notation}
\newtheorem{remark}[theorem]{Remark}
\newtheorem*{ack}{Acknowledgements}
\numberwithin{equation}{section}
\numberwithin{theorem}{section}
\title[Descent, generation, and noncommutative coherent algebras]{Preservation for generation \\ along the structure morphism of \\ coherent algebras over a scheme}
\author[A.~Bhaduri]{Anirban Bhaduri}
\address{A.~Bhaduri,
Department of Mathematics,
University of South Carolina, 
Columbia, SC 29208,
U.S.A.}
\email{abhaduri@email.sc.edu}
\author[S.~Dey]{Souvik Dey}
\address{S.~Dey,
Faculty of Mathematics and Physics,
Department of Algebra,
Charles University, 
Sokolovsk\'{a} 83, 186 75 Praha, 
Czech Republic}
\email{souvik.dey@matfyz.cuni.cz}
\author[P.~Lank]{Pat Lank}
\address{P.~Lank,
Department of Mathematics,
University of South Carolina, 
Columbia, SC 29208,
U.S.A.}
\email{plankmathematics@gmail.com}
\date{\today}
\keywords{Derived categories, Rouquier dimension, strong generators, noncommutative algebraic geometry, coherent algebras}
\subjclass[2020]{14F08 (primary), 14A22, 13D09, 16S38, 16E35, 14A30} 
\begin{document}

\begin{abstract}
    This work demonstrates classical generation is preserved by the derived pushforward along the structure morphism of a noncommutative coherent algebra to its underlying scheme.
    Additionally, we establish that the Krull dimension of a variety over a field is a lower bound for the Rouquier dimension of the bounded derived category associated with a noncommutative coherent algebra on it. This is an extension of a classical result of Rouquier to the noncommutative context.
\end{abstract}

\maketitle
\setcounter{tocdepth}{1}

\section{Introduction}
\label{sec:intro}

In this note, we examine the behavior of generation along the derived pushforward of the structure morphism from a noncommutative coherent algebra to its underlying scheme. As a result, our investigation introduces new methods coming from noncommutative homological techniques for establishing upper bounds on the Rouquier dimension within triangulated categories relevant to the commutative world.

Consider a Noetherian scheme $X$. The \textit{Rouquier dimension} of $D^b_{\operatorname{coh}}(X)$, denoted by $\dim D^b_{\operatorname{coh}}(X)$, is the smallest integer $n$ such that there exists an object $G$, where the smallest subcategory generated by $G$ using finite direct sums, summands, shifts, and at most $n+1$ cones coincides with $D^b_{\operatorname{coh}}(X)$. Any such subcategory is denoted as $\langle G \rangle_n$, and objects $G$ satisfying these conditions are called \textit{strong generators}. More generally, an object $G$ is a \textit{classical generator} if the smallest triangulated subcategory containing $G$ that is closed under direct summands coincides with $D^b_{\operatorname{coh}}(X)$. For further background, see Section~\ref{sec:generation}. 

There has been recent advancements that illuminate sufficient conditions under which $D^b_{\operatorname{coh}}(X)$ possesses such objects: for strong generators, any Noetherian quasi-excellent separated scheme of finite Krull dimension \cite{Aoki:2021}; for classical generators, any Noetherian $J\textrm{-}2$ scheme \cite{Elagin/Lunts/Schnurer:2020} or Notherian schemes whose closed subschemes admit open regular locus locus \cite{Dey/Lank:2024b}. Albeit being aware of the existence of such objects, explicit descriptions are limited to a few instances: smooth quasi-projective schemes over a field \cite{Rouquier:2008}, Frobenius pushforwards on a compact generator for Noetherian schemes of prime characteristic \cite{BILMP:2023}, singular varieties that admit a resolution of singularities \cite{Lank:2024, Dey/Lank:2024}, and objects arising from module categories \cite{Dey/Lank/Takahashi:2023}. 

Let's now focus on understanding the Rouquier dimension of $D^b_{\operatorname{coh}}(X)$ in the context of a singular variety $X$. In \cite{Lank:2024, Dey/Lank:2024}, it was demonstrated that if $\pi \colon \widetilde{X} \to X$ is a resolution of singularities and $G$ is a strong generator for $D^b_{\operatorname{coh}}(\widetilde{X})$, then $\mathbb{R}\pi_\ast G$ is a strong generator for $D^b_{\operatorname{coh}}(X)$. This provides effective methods for explicitly describing strong generators although the associated generation time remains difficult to establish.

Recent attention has been drawn to the case where $X$ is a projective curve \cite{Burban/Drozd:2011, Burban/Drozd/Gavran:2017, Hanlon/Hicks:2023,Burban/Drozd/Gavran:2017a} with significant progress made in establishing upper bounds on the Rouquier dimension of its bounded derived category. In particular, \cite{Burban/Drozd/Gavran:2017} identified an essentially surjective functor $D^b_{\operatorname{coh}}(\mathbb{X}) \to D^b_{\operatorname{coh}}(X)$, where $\mathbb{X}$ is a noncommutative projective curve, yielding a \textit{categorical resolution} of $X$. In the context of this work, a \textit{noncommutative scheme} is defined as a pair $(X, \mathcal{A})$, where $X$ is a Noetherian scheme, and $\mathcal{A}$ is a coherent $\mathcal{A}$-algebra \cite{Yekutieli/Zhang:2006, Burban/Drozd/Gavran:2017a, DeDeyn/Lank/ManaliRahul:2024b}. Additional background information can be found in Section~\ref{sec:noncommutative_schemes_background}. There alternative approaches to `noncommutative algebraic geometry' for which we refer the reader to \cite{Orlov:2016, Kontsevich/Rosenberg:2000, Laudal:2003, Bellamy/Rogalski/Schedler/Stafford/Wemyss:2016}.

In a closely related context, \cite{Elagin/Lunts/Schnurer:2020} demonstrated that the bounded derived category of coherent $\mathcal{A}$-modules, denoted $D^b_{\operatorname{coh}}(\mathcal{A})$, admits a classical generator whenever $\mathcal{A}$ is a coherent $\mathcal{O}_X$-algebra over a Noetherian $J\textrm{-}2$ scheme (see Remark~\ref{rmk:j_conditions} for details on terminology). There has been recent attention towards the existence of strong generators in this noncommutative setting \cite{DeDeyn/Lank/ManaliRahul:2024b}. 

Our work is motivated by two key aspects: the preservation of generation along a derived pushforward of a proper surjective morphism and the geometry of a noncommutative scheme. This represents a blend of ideas from \cite{Lank:2024, Dey/Lank:2024, Elagin/Lunts/Schnurer:2020, Burban/Drozd/Gavran:2017a} and leads us to our first result: an investigation into the preservation of classical generators through the derived pushforward along the structure morphism from a noncommutative scheme to its underlying scheme. This brings attention to our first result.

\begin{introthm}\label{introthm:descent_for_coherent_algebras}(see Theorem~\ref{thm:descent_for_coherent_algebras})
    Let $X$ be a Noetherian $J\textrm{-}2$ scheme of finite Krull dimension. Suppose $\mathcal{A}$ is a coherent $\mathcal{O}_X$-algebra with full support and canonical map $\pi \colon \mathcal{O}_X \to \mathcal{A}$. If $G$ is classical generator for $D^b_{\operatorname{coh}}(\mathcal{A})$, then $\mathbb{R}\pi_\ast G$ is a classical generator for $D^b_{\operatorname{coh}}(X)$.
\end{introthm}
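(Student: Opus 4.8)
The plan is to first reduce the statement to a generation property of the restriction-of-scalars functor, and then to establish that property by Noetherian induction, the crux being a generic splitting forced by the full-support hypothesis. The structure morphism $\pi\colon\mathcal{O}_X\to\mathcal{A}$ induces the restriction-of-scalars functor $\pi_\ast\colon\operatorname{Coh}(\mathcal{A})\to\operatorname{Coh}(X)$, sending a coherent $\mathcal{A}$-module to its underlying coherent $\mathcal{O}_X$-module. This functor is exact, so $\mathbb{R}\pi_\ast=\pi_\ast$ and it descends to a triangulated functor $D^b_{\operatorname{coh}}(\mathcal{A})\to D^b_{\operatorname{coh}}(X)$. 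Since triangulated functors preserve membership in thickenings, the hypothesis $\langle G\rangle=D^b_{\operatorname{coh}}(\mathcal{A})$ yields $\pi_\ast M\in\langle \mathbb{R}\pi_\ast G\rangle$ for every $M$; hence the essential image of $\pi_\ast$ is contained in $\mathcal{T}\colonequals\langle \mathbb{R}\pi_\ast G\rangle$. It therefore suffices to prove that the essential image of $\pi_\ast$ classically generates $D^b_{\operatorname{coh}}(X)$, that is, $\mathcal{T}=D^b_{\operatorname{coh}}(X)$.

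By a standard dévissage, $D^b_{\operatorname{coh}}(X)$ is classically generated by the pushforwards $(\iota_W)_\ast\mathcal{O}_W$ of structure sheaves of the integral closed subschemes $\iota_W\colon W\hookrightarrow X$, so it is enough to show $(\iota_W)_\ast\mathcal{O}_W\in\mathcal{T}$ for every such $W$. I would prove this by Noetherian induction on $W$, the inductive hypothesis being that every coherent sheaf supported on a proper closed subset of $W$ already lies in $\mathcal{T}$. For the inductive step, consider $\mathcal{A}\otimes_{\mathcal{O}_X}(\iota_W)_\ast\mathcal{O}_W$, which carries a natural $\mathcal{A}$-module structure from the left factor and is coherent over $\mathcal{O}_X$; its underlying module $\mathcal{M}_W\colonequals\pi_\ast\bigl(\mathcal{A}\otimes_{\mathcal{O}_X}(\iota_W)_\ast\mathcal{O}_W\bigr)$ thus lies in the essential image of $\pi_\ast$, hence in $\mathcal{T}$. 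Because $\mathcal{A}$ has full support, $\mathcal{A}_{\eta_W}\neq 0$ at the generic point $\eta_W$ of $W$, so Nakayama's lemma shows that the fibre $(\mathcal{M}_W)_{\eta_W}=\mathcal{A}_{\eta_W}\otimes_{\mathcal{O}_{X,\eta_W}}\kappa(\eta_W)$ is a nonzero $\kappa(\eta_W)$-vector space in which the image of the unit $1$ is nonzero. Since the stalk of $(\iota_W)_\ast\mathcal{O}_W$ at $\eta_W$ is the field $\kappa(\eta_W)$, this exhibits the unit morphism $(\iota_W)_\ast\mathcal{O}_W\to\mathcal{M}_W$ as a split monomorphism at $\eta_W$.

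The main difficulty is that this generic fibre may have dimension $d>1$, so $(\iota_W)_\ast\mathcal{O}_W$ is only a generic direct summand of $\mathcal{M}_W$, not generically isomorphic to it: neither the unit nor a spread-out retraction has cone supported on a proper closed subset of $W$, and a naive dévissage stalls because the kernel of the retraction has full support $W$ and is not visibly built from $\mathcal{A}$-modules. To circumvent this I would pass to the Verdier quotient of $D^b_{\operatorname{coh}}(X)$ by the thick subcategory $\mathcal{N}_W$ of complexes supported on proper closed subsets of $W$, which is contained in $\mathcal{T}$ by the inductive hypothesis. The generic retraction, together with the fact that it differs from a genuine splitting by a morphism factoring through an object of $\mathcal{N}_W$, makes the unit a split monomorphism in this quotient, so that $(\iota_W)_\ast\mathcal{O}_W$ becomes an honest direct summand of $\mathcal{M}_W$ there. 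As the image of $\mathcal{T}$ in the quotient is thick and contains $\mathcal{M}_W$, it contains this summand; pulling back along the localization, whose kernel $\mathcal{N}_W$ lies in $\mathcal{T}$, gives $(\iota_W)_\ast\mathcal{O}_W\in\mathcal{T}$. This closes the induction (the base case of a closed point being the concrete splitting of a skyscraper of a vector space) and shows $\mathcal{T}=D^b_{\operatorname{coh}}(X)$.

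The heart of the argument is precisely this passage from a generic summand to an honest one modulo lower-dimensional support, where the summand-closure of thick subcategories in the quotient does the work that a direct dévissage cannot; it is also where the noncommutativity and higher rank of $\mathcal{A}$ genuinely intervene through the integer $d$. I note that the $J\textrm{-}2$ and finite Krull dimension hypotheses are used to guarantee that the input is non-vacuous, namely that $D^b_{\operatorname{coh}}(\mathcal{A})$ admits a classical generator $G$ at all \cite{Elagin/Lunts/Schnurer:2020}; the transfer of generation established above relies only on $X$ being Noetherian and on $\mathcal{A}$ having full support.
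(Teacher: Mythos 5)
Your proposal is correct in substance, but it takes a genuinely different route from the paper's proof of Theorem~\ref{thm:descent_for_coherent_algebras}, so a comparison is worthwhile. The paper also argues by Noetherian induction, but its inductive step for integral $X$ runs through the regular locus: the $J\textrm{-}2$ hypothesis supplies a classical generator $E$ of $D^b_{\operatorname{coh}}(X)$ \cite{Elagin/Lunts/Schnurer:2020} and a nonempty affine open $U\subseteq\operatorname{reg}(X)$ on which $j^\ast\pi_\ast G$ is perfect with full support (regularity plus finite Krull dimension enter here), hence a classical generator of $D^b_{\operatorname{coh}}(U)$ by Neeman's lemma \cite{Neeman:1992}; the gluing lemma \cite[Lemma 4.1.2]{Lank:2024} then exhibits $E$ as finitely built from $\pi_\ast G\oplus i_\ast A$ with $A$ supported on a closed complement, which the induction absorbs via $\pi_\ast i_\ast=i_\ast\theta_\ast$, and irreducible components handle the general case. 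You never touch the regular locus: you exploit the unit of the algebra instead, splitting $(\iota_W)_\ast\mathcal{O}_W$ generically off the pushed-forward $\mathcal{A}$-module $\mathcal{A}\otimes_{\mathcal{O}_X}(\iota_W)_\ast\mathcal{O}_W$ (full support, Nakayama, and $1\neq 0$ in a nonzero ring), then promote the generic summand through a Verdier quotient. What your route buys is real: the transfer argument uses only that $X$ is Noetherian and $\mathcal{A}$ has full support, so it proves a stronger statement in which $J\textrm{-}2$ and finite Krull dimension serve only to make the hypothesis on $G$ non-vacuous, whereas both are load-bearing in the paper's proof; what the paper's route buys is brevity, given the cited lemmas. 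Two facts you assert do need proof, though both are fine. First, the ``standard dévissage'' by structure sheaves of integral closed subschemes requires a generic-freeness/extension argument; alternatively you can bypass it by running your unit-splitting argument on $\mathcal{A}\otimes_{\mathcal{O}_X}(\iota_W)_\ast\mathcal{F}$ for an arbitrary coherent $\mathcal{F}$ on $W$ (a functional $\lambda$ with $\lambda(1)=1$ on the nonzero fibre algebra gives the generic retraction $\lambda\otimes\operatorname{id}$), which closes the Noetherian induction directly. Second, thickness of the image of $\mathcal{T}$ in the quotient: if $q(F)$ is a summand of $q(M)$ with complementary summand $q(C)$, then $q(F\oplus C)\cong q(M)$, and since both legs of a roof realizing this isomorphism have cones in $\mathcal{N}_W\subseteq\mathcal{T}$, one gets $F\oplus C\in\mathcal{T}$ and then $F\in\mathcal{T}$ by thickness of $\mathcal{T}$ in $D^b_{\operatorname{coh}}(X)$. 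Relatedly, the cleanest justification that the unit becomes split in the quotient is not that it ``differs from a splitting by a morphism factoring through $\mathcal{N}_W$'' (no globally defined retraction need exist), but that the spread-out retraction is defined over the open complement of a proper closed subset $Z\subsetneq W$ --- integrality of $W$ is what upgrades $r\circ u=1$ from the generic point to that open --- and the localization of Remark~\ref{rmk:nc_verdier_localization} (with $\mathcal{A}=\mathcal{O}_X$) transports it to a retraction of $q(u)$ in $D^b_{\operatorname{coh}}(X)/D^b_{\operatorname{coh},Z}(X)$, which maps onward to the quotient by $\mathcal{N}_W$.
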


There are interesting examples where Theorem~\ref{introthm:descent_for_coherent_algebras} can be applied. This includes categorical resolutions\footnote{See \cite{Kuznetsov/Lunts:2015, Lunts:2010, Kuznetsov:2007b}.}, homological projective duality\footnote{See \cite{Kuznetsov:2006} and more recently \cite{Perry:2019,Kuznetsov/Perry:2021a}.}, and noncommutative crepant resolutions\footnote{See \cite{VdB:2004, Stafford/VdB:2008,Iyama/Wemyss:2013, Dao/Faber/Ingalls:2015}.}. These are special cases of noncommutative schemes which could be leveraged for studying generation in the commutative setting. 

The next result gives a way to make this precise in mild situations.

\begin{introcor}\label{introcor:rouquier_bound_nc_descent}(see Corollary~\ref{cor:rouquier_bound_nc_descent})
    Let $R$ be a commutative Noetherian $J\textrm{-}2$ ring of finite Krull dimension and $\pi \colon R \to S$ a finite ring morphism such that $\pi_\ast S$ has full support as an $R$-module. If $G$ is a classical generator for $D^b_{\operatorname{coh}}(S)$ which is a bounded complex of $(S,S)$-bimodules, then
    \begin{displaymath}
        \dim D^b_{\operatorname{coh}}(R)\leq \operatorname{level}^{\pi_\ast G} (R) \cdot \big(\dim D^b_{\operatorname{coh}}(S) + 1\big) - 1.
    \end{displaymath}
\end{introcor}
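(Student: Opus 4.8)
The plan is to deduce the quantitative estimate from Theorem~\ref{thm:descent_for_coherent_algebras} by combining a Fourier--Mukai--type tensoring trick with the subadditivity of levels. Since $\pi$ is finite, $\pi_\ast$ is exact, so $\mathbb{R}\pi_\ast G = \pi_\ast G$, and Theorem~\ref{thm:descent_for_coherent_algebras} applies with $\mathcal{A}=S$ to give that $\pi_\ast G$ is a classical generator of $D^b_{\operatorname{coh}}(R)$. In particular $R\in\operatorname{thick}(\pi_\ast G)$, so $\ell\colonequals\operatorname{level}^{\pi_\ast G}(R)$ is finite. Writing $d\colonequals\dim D^b_{\operatorname{coh}}(S)$, I would fix a strong generator $H$ realizing this Rouquier dimension, so that $D^b_{\operatorname{coh}}(S)=\langle H\rangle_{d+1}$; such an $H$ exists because $D^b_{\operatorname{coh}}(S)$ admits a strong generator in this noncommutative setting.

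First I would fix an arbitrary $M\in D^b_{\operatorname{coh}}(R)$ and apply the exact endofunctor $-\otimes^{\mathbb{L}}_R M$ to the membership $R\in\langle\pi_\ast G\rangle_{\ell}$. Because this functor preserves shifts, finite direct sums, summands, and cones, it carries $\langle\pi_\ast G\rangle_{\ell}$ into $\langle\pi_\ast G\otimes^{\mathbb{L}}_R M\rangle_{\ell}$ and sends $R$ to $M$, yielding $M\in\langle\pi_\ast G\otimes^{\mathbb{L}}_R M\rangle_{\ell}$. The role of the hypothesis that $G$ is a complex of $(S,S)$-bimodules is that, as $R$ maps into the center of $S$, the object $\pi_\ast G\otimes^{\mathbb{L}}_R M$ retains a residual $S$-module structure; that is, $\pi_\ast G\otimes^{\mathbb{L}}_R M\cong\pi_\ast N_M$ for some $N_M\in D^b_{\operatorname{coh}}(S)$. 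This exhibits $M$ as built, in at most $\ell$ steps, from an object in the essential image of $\pi_\ast$.

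To finish, I would build $N_M$ from $H$: since $N_M\in D^b_{\operatorname{coh}}(S)=\langle H\rangle_{d+1}$, applying the exact functor $\pi_\ast$ gives $\pi_\ast N_M\in\langle\pi_\ast H\rangle_{d+1}$. Feeding this into the previous step through the subadditivity of levels, namely $\langle\langle-\rangle_{a}\rangle_{b}\subseteq\langle-\rangle_{ab}$ (see Section~\ref{sec:generation}), gives
\[
    M\in\langle\pi_\ast N_M\rangle_{\ell}\subseteq\big\langle\langle\pi_\ast H\rangle_{d+1}\big\rangle_{\ell}\subseteq\langle\pi_\ast H\rangle_{\ell(d+1)}.
\]
As $M$ was arbitrary, $D^b_{\operatorname{coh}}(R)=\langle\pi_\ast H\rangle_{\ell(d+1)}$, whence $\dim D^b_{\operatorname{coh}}(R)\leq\ell(d+1)-1=\operatorname{level}^{\pi_\ast G}(R)\cdot(\dim D^b_{\operatorname{coh}}(S)+1)-1$, as claimed.

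The step I expect to be the main obstacle is the identification $\pi_\ast G\otimes^{\mathbb{L}}_R M\cong\pi_\ast N_M$ with $N_M$ genuinely lying in the \emph{bounded} coherent category $D^b_{\operatorname{coh}}(S)$. Coherence is immediate from the Noetherian hypothesis, but boundedness is not: over a singular base the groups $\operatorname{Tor}^R_i(\pi_\ast G,M)$ need not vanish for $i\gg 0$, so a priori $N_M$ only lies in $D^-_{\operatorname{coh}}(S)$. This is precisely where the standing hypotheses enter: the $J\textrm{-}2$ and finite--Krull--dimension assumptions (via the existence results recalled in the introduction) ensure finite Rouquier dimension and let one control flat dimension on the regular locus and then reduce to the closed singular locus by Noetherian induction, forcing $N_M\in D^b_{\operatorname{coh}}(S)$. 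Once boundedness is secured, the remaining assembly is the formal level bookkeeping above, and the bimodule hypothesis on $G$ is exactly what supplies the $S$-structure needed to invoke $\dim D^b_{\operatorname{coh}}(S)$.
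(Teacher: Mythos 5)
Your overall skeleton---the projection formula $\pi_\ast G\otimes^{\mathbb{L}}_R M\cong\pi_\ast(\mathbb{L}\pi^\ast M\otimes^{\mathbb{L}}_S G)$, the tensoring trick applied to the membership $R\in\langle\pi_\ast G\rangle_\ell$, and the subadditivity of levels---is exactly the paper's, but the step you yourself flag as the main obstacle is a genuine gap, and the repair you sketch cannot work. For arbitrary $M\in D^b_{\operatorname{coh}}(R)$ the object $N_M=\mathbb{L}\pi^\ast M\otimes^{\mathbb{L}}_S G$ simply is not bounded, and no appeal to the $J\textrm{-}2$ condition, finite Krull dimension, or Noetherian induction can force it to be: take $R=S=k[x]/(x^2)$ with $\pi=\operatorname{id}$ (finite, full support, excellent, hence $J\textrm{-}2$, dimension $0$), $G=R\oplus k$ (a classical generator, trivially a complex of bimodules since $R$ is commutative), and $M=k$. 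Then $\pi_\ast G\otimes^{\mathbb{L}}_R M$ contains $k\otimes^{\mathbb{L}}_R k$ as a direct summand, which has nonzero cohomology in all degrees $\leq 0$ because $\operatorname{Tor}^R_i(k,k)\neq 0$ for every $i\geq 0$. So $N_M$ lies only in $D^-_{\operatorname{coh}}(S)$, the containment $N_M\in\langle H\rangle_{d+1}$ is meaningless, and your final display collapses. This failure is intrinsic to singular $R$, which is precisely the case of interest.

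The missing idea, which is how the paper closes this hole, is to run your argument only for $P\in\operatorname{perf}(R)$: perfect complexes have finite flat dimension, so $\mathbb{L}\pi^\ast P\otimes^{\mathbb{L}}_S G$ \emph{is} bounded coherent, and your bookkeeping then yields $\operatorname{perf}(R)\subseteq\langle\pi_\ast H\rangle_{\ell(d+1)}$ (with $H$ a strong generator of $D^b_{\operatorname{coh}}(S)$ realizing its Rouquier dimension, available after the harmless reduction to the case $\dim D^b_{\operatorname{coh}}(S)<\infty$, a reduction you also skipped). One then needs an external input to pass from generation of $\operatorname{perf}(R)$ to generation of all of $D^b_{\operatorname{coh}}(R)$ with the same count: this is \cite[Theorem 1.1]{Lank/Olander:2024} (see also \cite[Theorem 7]{Opperman/Stovicek:2012}), which gives $D^b_{\operatorname{coh}}(R)=\langle\pi_\ast H\rangle_{\ell(d+1)}$ and hence the stated bound. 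Without this theorem (or an equivalent), the strategy of building every object of $D^b_{\operatorname{coh}}(R)$ directly by tensoring, as you propose, cannot be completed.
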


Another interesting problem is to provide lower bounds for the Rouquier dimension of a triangulated category. Our last result provides lower bounds for the Rouquier dimension of the bounded derived category of a coherent $\mathcal{O}_X$-algebras on a general class of schemes. 

\begin{introthm}\label{introthm:}(see Theorem~\ref{thm:lower_bound_algebras})
    Let $X$ be a scheme of finite Krull dimension which is integral, Jacobson, catenary, $J\textrm{-}2$, and Noetherian. If $\mathcal{A}$ is a coherent $\mathcal{O}_X$-algebra with full support, then the Rouquier dimension of $D^b_{\operatorname{coh}}(\mathcal{A})$ is at least the Krull dimension of $X$.
\end{introthm}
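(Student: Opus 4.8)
The plan is to reduce the global statement to a local question at a carefully chosen point and then to invoke a Koszul-object lower bound. First I would produce a regular closed point $x \in X$ whose local ring attains the full dimension $d := \dim X$. Since $X$ is integral its generic point is regular, so the regular locus is nonempty; because $X$ is $J\textrm{-}2$ its regular locus $\operatorname{Reg}(X)$ is open, and since $X$ is Jacobson the closed points are dense, so I may pick a closed point $x \in \operatorname{Reg}(X)$. The hypotheses that $X$ is integral, catenary, Jacobson, and of finite Krull dimension then guarantee $\dim \mathcal{O}_{X,x} = d$. Writing $R := \mathcal{O}_{X,x}$ and $A := \mathcal{A}_x$, the ring $R$ is regular local of dimension $d$, $A$ is a module-finite $R$-algebra, and fullness of the support of $\mathcal{A}$ localizes to $\operatorname{Supp}_R A = \operatorname{Spec} R$.

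Next I would pass from $\mathcal{A}$ to $A$. Localization at $x$ is exact and carries coherent $\mathcal{A}$-modules to finitely generated $A$-modules, so it induces a triangulated functor $D^b_{\operatorname{coh}}(\mathcal{A}) \to D^b(\operatorname{mod} A)$. This functor is essentially surjective: any finitely generated $A$-module extends to a coherent $\mathcal{A}$-module on a neighborhood of $x$, which in turn extends to a coherent $\mathcal{A}$-module on all of $X$ with the prescribed stalk at $x$. An essentially surjective triangulated functor cannot raise Rouquier dimension, since the image of a realization $D^b_{\operatorname{coh}}(\mathcal{A}) = \langle G \rangle_{n}$ lands in $\langle G_x \rangle_{n}$; hence $\dim D^b(\operatorname{mod} A) \leq \dim D^b_{\operatorname{coh}}(\mathcal{A})$, and it suffices to prove $\dim D^b(\operatorname{mod} A) \geq d$.

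For the local bound I would exploit that the structure map sends $R$ into the center of $A$, so that $R$ acts centrally on $D^b(\operatorname{mod} A)$: each $t \in R$ acts as a degree-zero natural endomorphism of the identity, and one may form Koszul objects $M /\!\!/ t = \operatorname{cone}(M \xrightarrow{\, t \,} M)$ and iterate them. Choosing a regular system of parameters $t_1, \dots, t_d$ for $R$ (a genuine regular sequence, as $R$ is Cohen--Macaulay), full support of $A$ ensures that each partial Koszul object $A /\!\!/ (t_1, \dots, t_i)$ is nonzero, its zeroth homology being supported on the nonempty closed set $V(t_1, \dots, t_i)$. A Rouquier-style ghost argument built from these central operators then shows that assembling such a finite-length Koszul object out of \emph{any} classical generator requires at least $d$ cones; this is the generator-independent lower bound $\dim D^b(\operatorname{mod} A) \geq d$, the noncommutative analogue of Rouquier's computation \cite{Rouquier:2008}, carried out within the central-action framework for coherent algebras of \cite{DeDeyn/Lank/ManaliRahul:2024b}. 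Combining the two reductions yields $\dim D^b_{\operatorname{coh}}(\mathcal{A}) \geq \dim D^b(\operatorname{mod} A) \geq d = \dim X$.

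The main obstacle I anticipate is precisely this generator-independence. Unlike an upper bound, a lower bound on Rouquier dimension must be witnessed against every classical generator at once, and a fixed finite-length test object can be cheap to build from a cleverly chosen generator (for instance one possessing a simple summand), so one cannot simply track the level of a single residue field. Routing the argument through the central action of the regular sequence, rather than through one skyscraper, is exactly what produces a bound uniform in the generator; the crux is verifying that this machinery transfers intact to $D^b(\operatorname{mod} A)$ for a noncommutative coherent algebra, and in particular that the full-support hypothesis supplies a length-$d$ central regular sequence whose Koszul objects remain nonzero at every stage. A secondary technical point, where the catenary and Jacobson hypotheses are essential, is the existence in the first step of a regular closed point whose local ring realizes the full Krull dimension $d$.
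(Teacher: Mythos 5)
Your reduction to the stalk is where the argument breaks down. The engine of your proposal is the ``generator-independent'' local bound $\dim D^b(\operatorname{mod} A) \geq d$ for $A$ module-finite with full support over a regular local ring $(R,\mathfrak{m},k)$ of dimension $d$, and neither your sketch nor the cited literature establishes it; in fact the specific mechanism you propose cannot work. Since the $t_i$ act through $R$, the Koszul object satisfies $A /\!\!/ (t_1,\dots,t_d) \simeq A \otimes^{\mathbb{L}}_R k$, whose homology modules $\operatorname{Tor}^R_i(A,k)$ are finitely many modules of finite length; when $A$ is $R$-free (already $A=R$, or a matrix algebra) it is the single module $A/\mathfrak{m}A$. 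So one may enlarge any classical generator to $G' = G \oplus \bigoplus_i \operatorname{Tor}^R_i(A,k)$, after which the Koszul object lies in $\langle G' \rangle_{c}$ for $c$ bounded independently of $d$ (in the free case, in $\langle G'\rangle_1$). Ghost arguments are structurally incapable of rescuing this: if $X$ is a direct summand of (a shift of) $G$, then every $G$-ghost map out of $X$ kills $\operatorname{id}_X$ and hence is zero, so no ghost sequence can certify $\operatorname{level}^G(X) > 1$. Thus the test object must vary with the generator --- exactly the worry you raised --- and ``routing through the central action'' does not remove it; the reference you lean on concerns existence of strong generators, not lower bounds. To my knowledge the local inequality you need is open already for $A = R = k[[x,y]]$. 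There is also a structural signal that this route cannot be the intended one: if the stalkwise bound held, the Jacobson and catenary hypotheses of Theorem~\ref{thm:lower_bound_algebras} would be superfluous (any scheme of finite Krull dimension has a point of full codimension), so localizing at one closed point discards precisely the leverage those hypotheses provide. (A secondary, fixable, issue: essential surjectivity of $D^b_{\operatorname{coh}}(\mathcal{A}) \to D^b(\operatorname{mod} \mathcal{A}_x)$ requires lifting \emph{complexes}, via the Verdier localization of Remark~\ref{rmk:nc_verdier_localization} plus a spreading-out argument, not just extension of modules.)

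The paper's proof keeps the Jacobson advantage by never localizing the generator at a single point. Given a strong generator $G$ of $D^b_{\operatorname{coh}}(\mathcal{A})$ of minimal generation time, it uses an openness result of Dey--Lank and Letz to find a dense affine open $V$, \emph{depending on $G$}, inside the regular locus, on which $j^\ast \pi_\ast G$ and $\mathcal{O}_V$ finitely build one another in one cone; this makes levels with respect to $j^\ast\pi_\ast G$ control levels with respect to $\mathcal{O}_V$. Only then does it test against a closed point $p \in V$ with $\dim \mathcal{O}_{V,p} = \dim X$ (this is where integral, Jacobson, catenary, and finite dimension enter), and the Koszul-type bound $\operatorname{level}^{\mathcal{O}_{V,p}}(\kappa(p)) = \dim X + 1$ is invoked only with respect to the \emph{structure sheaf} of a regular local ring, where it is a theorem --- never with respect to an arbitrary generator. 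Finally, the test object is lifted to a global object $Q \in D^b_{\operatorname{coh}}(\mathcal{A})$ with $j^\ast Q \cong s_\ast s^\ast j^\ast \mathcal{A}$ using the Verdier localization of Remark~\ref{rmk:nc_verdier_localization}, and a chain of level inequalities along the exact functors $j^\ast$ and $\pi'_\ast$ transfers the bound back to the generation time of $G$. Your first step (producing a regular closed point of full codimension) matches the paper, but the core of a correct proof has to be this open-set comparison, not a stalkwise, generator-uniform Koszul bound.
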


Theorem~\ref{thm:lower_bound_algebras} is applicable to many cases of interest. This includes not only coherent algebras with full support over a variety, but also for such algebras over rings of mixed characteristic (i.e over $\mathbb{Z}$). It is worthwhile to note that in the commutative setting a similar bound holds for varieties over a field, see \cite[Proposition 7.16]{Rouquier:2008}. 

\begin{ack}
    The authors would like to thank Ryo Takahashi for clarifying Lemma~\ref{lem:noncommutative_projection_formula}, Timothy De Deyn for comments in an earlier draft, and the anonymous referee for many helpful suggestions. Souvik Dey was partially
    supported by the Charles University Research Center program No. UNCE/24/SCI/022
    and a grant GA CR 23-05148S from the Czech Science Foundation. 
\end{ack}

\section{Generation}
\label{sec:generation}

This section revisits two crucial notions of generation in triangulated categories, with a primary focus on those categories constructed from quasi-coherent sheaves on a Noetherian scheme. For foundational information on generation, please see \cite{BVdB:2003, Rouquier:2008, ABIM:2010}. We fix a triangulated category $\mathcal{T}$ with shift functor $[1]\colon \mathcal{T} \to \mathcal{T}$. Unless otherwise stated, all rings considered are Noetherian, and a `module' means a \textit{right module}.

\begin{notation}
    Let $X$ be a Noetherian scheme.
    \begin{itemize}
        \item $D_{\operatorname{Qcoh}}(X)$ is the derived category of complexes of $\mathcal{O}_X$-modules whose cohomology is quasi-coherent
        \item $D^b_{\operatorname{coh}}(X)$ is the derived category of bounded complexes of $\mathcal{O}_X$-modules whose cohomology is coherent.
    \end{itemize}
    By abuse of notation, we let $D^b_{\operatorname{coh}}(R)$ denote $D^b_{\operatorname{coh}}(\operatorname{Spec}(R))$ if $R$ is a commutative Noetherian ring, and similarly for other associated categories.
\end{notation}

\begin{definition}
    A full triangulated subcategory $\mathcal{S}$ of $\mathcal{T}$ is
    \textbf{thick} whenever it is closed under retracts. The smallest thick subcategory in $\mathcal{T}$ containing
    $\mathcal{S}$ is denoted $\langle \mathcal{S} \rangle$. If $\mathcal{S}$ consist of a single object $S$, then we set $\langle \mathcal{S} \rangle=\langle S \rangle$.
\end{definition}

\begin{definition}
    Let $\mathcal{S}$ be a subcategory of $\mathcal{T}$.
    \begin{enumerate}
    \item $\langle \mathcal{S} \rangle_0 = 0$
    \item $\langle \mathcal{S} \rangle_1$ is the full subcategory containing $\mathcal{S}$ closed under shifts and retracts of finite coproducts
    \item For $n\geq 2$, $\langle \mathcal{S} \rangle_n$ denotes the full subcategory of objects which are retracts of an object $E$ appearing in a distinguished triangle
    \begin{displaymath}
        A \to E \to B \to A[1]
    \end{displaymath}
    where $A\in \langle \mathcal{S} \rangle_{n-1}$ and $B\in
    \langle \mathcal{S} \rangle_1$.
\end{enumerate}
    If $\mathcal{S}$ consist of a single object $G$, then we set $\langle \mathcal{S} \rangle_n =\langle G \rangle_n$.
\end{definition}

\begin{remark}
    It can be seen that $\bigcup_{n=0}^\infty \langle \mathcal{S} \rangle_n$ is contained in $\langle \mathcal{S} \rangle$ because of how each $\langle \mathcal{S} \rangle_n$ is defined. An induction argument will show that $\bigcup_{n=0}^\infty \langle \mathcal{S} \rangle_n$ is a thick subcategory of $\mathcal{T}$ containing $\mathcal{S}$, and so, $\langle \mathcal{S} \rangle$ is a subcategory of $\bigcup_{n=0}^\infty \langle \mathcal{S} \rangle_n$. Tying this together gives us an exhaustive filtration:
    \begin{displaymath}
        \langle \mathcal{S} \rangle_0 \subseteq \langle \mathcal{S} \rangle_1 \subseteq \cdots \subseteq \bigcup_{n=0}^\infty \langle \mathcal{S} \rangle_n = \langle \mathcal{S} \rangle.
    \end{displaymath}
\end{remark}

\begin{definition}
    Let $E,G$ be objects of $\mathcal{T}$. The object $G$ \textbf{finitely
    builds} $E$ if $E$ belongs to $\langle G \rangle$, and if every object of
    $\mathcal{T}$ is finitely built by $G$, then we say $G$ is a
    \textbf{classical generator}. Moreover, if there exists an $n\geq 0$ such
    that $\langle G \rangle_n = \mathcal{T}$, then $G$ is called a
    \textbf{strong generator}.
\end{definition}

\begin{remark}\label{rmk:j_conditions}
    Let $X$ be a Noetherian scheme. There is an interesting connection with openness of the regular locus\footnote{The collection of points $p$ in $X$ such that $\mathcal{O}_{X,p}$ is regular.} of $X$ and existence of classical generators for $D^b_{\operatorname{coh}}(X)$. Specifically, it has been shown that every closed integral subscheme $Z$ of $X$ has an open regular locus if, and only if, $D^b_{\operatorname{coh}}(Z)$ admits a classical generator (see \cite[Theorem 1.1]{Dey/Lank:2024b}). This particular result was initially studied in the affine setting \cite{Iyengar/Takahashi:2019}. The openness of a regular locus for a scheme has be of interest outside of generation problems, and leads to the following well-studied notions (see \cite[\href{https://stacks.math.columbia.edu/tag/07P6}{Tag 07P6}]{StacksProject} and \cite[\href{https://stacks.math.columbia.edu/tag/07R2}{Tag 07R2}]{StacksProject} for details):
    \begin{enumerate}
        \item $X$ is \textbf{$\textrm{J-0}$} if the regular locus of $\mathcal{X}$ contains a nonempty open subset of $X$
        \item $X$ is \textbf{$\textrm{J-1}$} if the regular locus of $X$ is open
        \item $X$ is \textbf{$\textrm{J-2}$} if the regular locus of $Y$ is open for every morphism $Y\to X$ that is locally of finite type.
    \end{enumerate}
\end{remark}

\begin{example}\label{ex:j2_scheme_classical_generator}
    \begin{enumerate}
        \item \cite[Theorem 4.15]{Elagin/Lunts/Schnurer:2020} If $X$ is a Noetherian $J\textrm{-}2$, then $D^b_{\operatorname{coh}}(X)$ of  admits a classical generator.
        \item \cite[Main Theorem]{Aoki:2021} If $X$ is a Noetherian
        quasi-excellent separated scheme of finite Krull dimension, then
        $D^b_{\operatorname{coh}}(X)$ admits a strong generator.
        \item \cite[Corollary 3.9]{BILMP:2023} If $X$ is a Noetherian $F$-finite scheme, $G$ is a compact generator, and $e \gg 0$, then $F_\ast^e G$ is a classical generator for $D^b_{\operatorname{coh}}(X)$.
    \end{enumerate}
\end{example}

\begin{definition}
    Suppose $A,B,C$ are objects of $\mathcal{T}$. If $A$ belongs to $\langle B \rangle$, then we say the \textbf{level} of $A$ with respect to $B$ is the minimal $n$ required such that $A$ belongs to $\langle B \rangle_n$. This value is denoted by $\operatorname{level}_\mathcal{T}^B(A)$. If $C$ is a strong generator for $\mathcal{T}$, then its \textbf{generation time} is the minimal $m$ needed so that the level of an object $A$ in $\mathcal{T}$ with respect to $C$ is at most $m+1$. The smallest integer $d$ such that there exists a strong generator $G$ in $\mathcal{T}$ whose generation time is $d+1$ is called the \textbf{Rouquier dimension} of $\mathcal{T}$, and it is denoted $\dim \mathcal{T}$.
\end{definition}

\section{Noncommutative schemes}
\label{sec:noncommutative_schemes_background}

This section draws directly on content found in \cite{Yekutieli/Zhang:2006, Elagin/Lunts/Schnurer:2020, Burban/Drozd/Gavran:2017a}. For further details, the reader is encouraged to refer to these sources; especially \cite[Section 4]{Elagin/Lunts/Schnurer:2020} where background is sourced from. Consider a Noetherian scheme $X$. Let $\mathcal{A}$ be an $\mathcal{O}_X$-algebra. The notions of coherent and quasi-coherent $\mathcal{A}$-modules are defined in the obvious way. We denote the respective full subcategories of quasi-coherent and coherent sheaves in the category $\operatorname{Mod}\mathcal{A}$ of $\mathcal{A}$-modules by $\operatorname{Qcoh}\mathcal{A}$ and $\operatorname{coh}\mathcal{A}$. Note that $\operatorname{coh}\mathcal{A}$ is a full abelian subcategory of $\operatorname{Mod}\mathcal{A}$, which is also abelian. Since $\mathcal{A}$ is an $\mathcal{O}_X$-algebra, there exists a corresponding canonical map $\pi \colon \mathcal{O}_X \to \mathcal{A}$. If $E \in \operatorname{Mod}\mathcal{A}$, we can view $E$ as a sheaf of $\mathcal{O}_X$-modules by locally restricting scalars via $\pi$. We denote the resulting sheaf of $\mathcal{O}_X$-modules as $\pi_\ast E$.

\begin{remark}\label{rmk:ELS_lemma_quasi_coherent_modules}
    The following is \cite[Lemma 4.1]{Elagin/Lunts/Schnurer:2020}.
    \begin{enumerate}
        \item Assume $\mathcal{A}\in \operatorname{Qcoh}X$. An $\mathcal{A}$-module $E$ is quasi-coherent if, and only if, $\pi_\ast E$ is quasi-coherent as an $\mathcal{O}_X$-module.
        \item Assume $\mathcal{A}\in \operatorname{coh}X$. An $\mathcal{A}$-module $E$ is coherent if, and only if, $\pi_\ast E$ is coherent as an $\mathcal{O}_X$-module.
    \end{enumerate}
\end{remark}

\begin{definition}
    \begin{enumerate}
        \item A \textbf{noncommutative scheme} is a pair $(Y,\mathcal{B})$ where $Y$ is a scheme and $\mathcal{B}$ is a quasi-coherent sheaf of $\mathcal{O}_Y$-algebras. 
        \item A \textbf{morphism} of noncommutative schemes $f\colon (Y,\mathcal{B}) \to (X,\mathcal{A})$ is a pair $(f_X, f^\#)$ where $f_X \colon Y \to X$ is a morphism of schemes and$f^\#$ is a morphism of $f^{-1}_X \mathcal{A}$-algebras $f^{-1}_X \mathcal{A} \to \mathcal{B}$. 
    \end{enumerate}
\end{definition}

\begin{remark}\label{rmk:canonical_morphism}
    \begin{enumerate}
        \item If $X$ is a scheme, then the pair $(X,\mathcal{O}_X)$ is a noncommutative scheme. Assume $\mathcal{A}$ is a quasi-coherent $\mathcal{O}_X$-algebra. Consider the morphism of noncommutative schemes: 
        \begin{displaymath}
            (1_X, \pi)\colon (X,\mathcal{A})\to (X,\mathcal{O}_X).
        \end{displaymath}
        This pair is referred to as the \textbf{structure morphism}, and we abuse notation to denote it as $\pi$. \item Let $i\colon Z \to X$ be a closed immersion $i\colon Z \to X$. This induces a morphism of noncommutative schemes $(Z,i^\ast \mathcal{A}) \to (X,\mathcal{A})$, which we abuse notation and denote by $i$ as well. There exists a commutative diagram of noncommutative schemes:
        \begin{displaymath}
            \begin{tikzcd}[ampersand replacement=\&]
                {(Z,i^\ast \mathcal{A})} \& {(X,\mathcal{A})} \\
                Z \& X.
                \arrow["i", from=1-1, to=1-2]
                \arrow["\theta"', from=1-1, to=2-1]
                \arrow["\pi", from=1-2, to=2-2]
                \arrow["i"', from=2-1, to=2-2]
            \end{tikzcd}
        \end{displaymath}
        Here, $\theta$ is the structure morphism $\mathcal{O}_Z \to j^\ast \mathcal{A}$.
    \end{enumerate}
\end{remark}

\begin{remark}
    Let $f \colon Y \to X$ be a morphism of schemes, and set $\mathcal{B}:= f^\ast \mathcal{A}$
    \begin{enumerate}
        \item Remark~\ref{rmk:ELS_lemma_quasi_coherent_modules} tells us $\mathcal{B}$ is an $\mathcal{O}_Y$-algebra. Moreover, if $\mathcal{A}$ is (quasi-)coherent, then so is $\mathcal{B}$.
        \item There are the usual adjunctions $f^\ast \colon \operatorname{Mod}\mathcal{A} \to \operatorname{Mod}\mathcal{B}$ and $f_\ast \colon \operatorname{Mod}\mathcal{B} \to \operatorname{Mod}\mathcal{A}$ which are defined in the usual way. Note that $f^\ast$ is left adjoint to $f_\ast$. These functors commutes with the forgetful functor to modules over the underlying schemes.
        \item If $\mathcal{A}$ is quasi-coherent, then the adjunction above restrict to one between quasi-coherent sheaves.
        \item If $\mathcal{A}$ is coherent, then $f^\ast$ restricts to $f^\ast \colon \operatorname{coh}\mathcal{A}\to \operatorname{coh}\mathcal{B}$.
    \end{enumerate}
    For further details, please refer to \cite[Section 4]{Elagin/Lunts/Schnurer:2020}.
\end{remark}

The following Verdier localization sequence will be vital for the proof of Theorem~\ref{thm:descent_for_coherent_algebras}. Assume $\mathcal{A}$ is a coherent $\mathcal{O}_X$-algebra. Denote the derived category of complexes of coherent $\mathcal{A}$-modules by $D_{\operatorname{coh}}(\mathcal{A})$, and $D^b_{\operatorname{coh}}(\mathcal{A})$ for the full subcategory of bounded complexes. If $Z$ is a closed subscheme of $X$, we denote by $D^b_{\operatorname{coh},Z} (\mathcal{A})$ the full subcategory of $D^b_{\operatorname{coh}}(\mathcal{A})$ whose cohomology sheaves are supported in $Z$ when viewed as $\mathcal{O}_X$-modules. Note that $D^b_{\operatorname{coh},Z}(\mathcal{A})$ is a thick subcategory of $D^b_{\operatorname{coh}}(\mathcal{A})$.

\begin{remark}\label{rmk:nc_verdier_localization}
    If $j\colon U \to X$ is an open immersion, then there exists a Verdier localization sequence
    \begin{displaymath}
        D^b_{\operatorname{coh},Z} (\mathcal{A}) \to D^b_{\operatorname{coh}}(\mathcal{A}) \xrightarrow{j^\ast} D^b_{\operatorname{coh}}(j^\ast \mathcal{A})
    \end{displaymath}
    where $Z:= X \setminus Y$. Note that the exact functor $j^\ast\colon D^b_{\operatorname{coh}}(\mathcal{A}) \to D^b_{\operatorname{coh}}(j^\ast \mathcal{A})$ is a small abuse of notation, but it is defined by applying $j^\ast$ to each component of a complex in $D^b_{\operatorname{coh}}(\mathcal{A})$. This is \cite[Theorem 4.4]{Elagin/Lunts/Schnurer:2020}.
\end{remark}

\begin{example}\label{ex:ELS_j2_classical_generator}
    \begin{enumerate}
        \item Let $X$ be a Noetherian $J\textrm{-}2$ scheme. If $\mathcal{A}$ is a coherent $\mathcal{O}_X$-algebra, then $D^b_{\operatorname{coh}}(\mathcal{A})$ admits a classical generator. This is \cite[Theorem 4.15]{Elagin/Lunts/Schnurer:2020}.
        \item Let $X$ be a separated scheme of finite type over a perfect field. Then $D^b_{\operatorname{coh}}(\mathcal{A})$ admits a strong generator for any coherent $\mathcal{O}_X$-algebra, see \cite[Example 3.9]{DeDeyn/Lank/ManaliRahul:2024a}. The affine setting was investigated \cite[Remark 2.6]{Elagin/Lunts/Schnurer:2020}.
    \end{enumerate}
\end{example}

\begin{remark}\label{rmk:nc_closed_immersion}
    Suppose $i \colon Z \to X$ is a closed immersion. Note $i_\ast \colon \operatorname{Mod}Z \to \operatorname{Mod}X$ is exact and preserves coherence. If $\mathcal{A}$ is a coherent $\mathcal{O}_X$-module, then $i_\ast \colon \operatorname{coh}i^\ast \mathcal{A} \to \operatorname{coh}\mathcal{A}$ is well-defined and exact. Hence, this gives an induced (exact) derived functor $i_\ast \colon D^b_{\operatorname{coh}}(i^\ast \mathcal{A})\to D^b_{\operatorname{coh}}(\mathcal{A})$. Moreover, if $G$ is a classical generator for $D^b_{\operatorname{coh}}(i^\ast \mathcal{A})$, then $i_\ast G$ is a classical generator for $D^b_{\operatorname{coh},Z} (\mathcal{A})$. This is \cite[Proposition 4.6]{Elagin/Lunts/Schnurer:2020}.
\end{remark}

\section{Descent along structure morphism}
\label{sec:descent_finite_algebras}

This section investigates the preservation of classical generators through the derived pushforward along the structure morphism from a noncommutative scheme to its underlying scheme. We start with a few elementary lemmas.

\begin{lemma}\label{lem:coherent_sheaves_exact_on_module_categories}
    Let $X$ be a Noetherian scheme and $\mathcal{A}$ be a coherent $\mathcal{O}_X$-algebra. Then the canonical map $\pi \colon \mathcal{O}_X \to \mathcal{A}$ induces a exact functor $\pi_\ast \colon \operatorname{coh}\mathcal{A} \to \operatorname{coh}X$. 
\end{lemma}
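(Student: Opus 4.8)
The plan is to verify two things: that $\pi_\ast$ genuinely lands in $\operatorname{coh}X$, and that it is exact. The functor itself is nothing but restriction of scalars along the canonical ring map $\pi$; that is, a coherent $\mathcal{A}$-module $E$ is sent to the same underlying sheaf equipped with the $\mathcal{O}_X$-action induced by $\pi$, and a morphism of $\mathcal{A}$-modules is sent to itself viewed as a morphism of $\mathcal{O}_X$-modules.

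For well-definedness of the target, I would invoke Remark~\ref{rmk:ELS_lemma_quasi_coherent_modules}(2) directly: since $\mathcal{A}$ is a coherent $\mathcal{O}_X$-algebra, an $\mathcal{A}$-module $E$ is coherent if and only if $\pi_\ast E$ is coherent as an $\mathcal{O}_X$-module. Thus $E \in \operatorname{coh}\mathcal{A}$ forces $\pi_\ast E \in \operatorname{coh}X$, so no further argument is needed to see that $\pi_\ast$ maps into $\operatorname{coh}X$.

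For exactness, the key observation is that exactness in both $\operatorname{coh}\mathcal{A}$ and $\operatorname{coh}X$ is detected on the underlying sheaves of abelian groups, hence may be tested on stalks. Concretely, a sequence $0 \to E' \to E \to E'' \to 0$ of coherent $\mathcal{A}$-modules is exact precisely when it is exact as a sequence of sheaves of abelian groups. Since $\pi_\ast$ leaves the underlying sheaf of abelian groups unchanged — it merely restricts the $\mathcal{A}$-action to the $\mathcal{O}_X$-action — kernels and cokernels are computed identically before and after applying $\pi_\ast$, so the functor preserves short exact sequences and is therefore exact. The only (modest) point requiring care is the compatibility of the abelian structure on $\operatorname{coh}\mathcal{A}$ with that inherited through restriction of scalars; here I would lean on the fact, recalled in Section~\ref{sec:noncommutative_schemes_background}, that $\operatorname{coh}\mathcal{A}$ is a full abelian subcategory of $\operatorname{Mod}\mathcal{A}$, so that exactness in $\operatorname{coh}\mathcal{A}$ coincides with exactness of the underlying stalkwise sequences.
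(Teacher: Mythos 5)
Your proposal is correct and follows essentially the same route as the paper: both arguments reduce exactness to the observation that restriction of scalars along $\pi$ leaves the underlying sheaf of abelian groups untouched, after first citing Remark~\ref{rmk:ELS_lemma_quasi_coherent_modules} for well-definedness of the target. The only cosmetic difference is that the paper makes the exactness check local (reducing to $X=\operatorname{Spec}(R)$ and a finite $R$-algebra $S$, where the statement is the exactness of $\operatorname{mod}S \to \operatorname{mod}R$), whereas you argue directly on stalks of the underlying sheaves; both implementations are fine.
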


\begin{proof}
    If $E\in \operatorname{coh}\mathcal{A}$, then $\pi_\ast E\in \operatorname{coh}X$ by Remark~\ref{rmk:ELS_lemma_quasi_coherent_modules}. This gives us a functor $\pi_\ast \colon \operatorname{coh}\mathcal{A} \to \operatorname{coh}X$. To verify exactness, consider a short exact sequence in $\operatorname{coh}\mathcal{A}$:
    \begin{displaymath}
        0 \to A \to B \to C \to 0.
    \end{displaymath}
    This is a local problem on $X$, so we may assume $X=\operatorname{Spec}(R)$ for some commutative Noetherian ring $R$, and $\mathcal{A}$ can be replaced by a finite $R$-algebra $S$. However, $\pi \colon \operatorname{mod}S \to \operatorname{mod}R$ is exact, as the restriction of scalars does not alter the underlying abelian groups. This completes the proof.
\end{proof}

\begin{lemma}\label{lem:cohernet_sheaves_exact_triangulated}
    Let $X$ be a Noetherian scheme and $\mathcal{A}$ be a coherent $\mathcal{O}_X$-algebra. Then the canonical map $\pi \colon \mathcal{O}_X \to \mathcal{A}$ induces a exact functor $\pi_\ast \colon D^b_{\operatorname{coh}}(\mathcal{A}) \to D^b_{\operatorname{coh}}(X)$.
\end{lemma}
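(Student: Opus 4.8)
The plan is to upgrade the exact functor $\pi_\ast \colon \operatorname{coh}\mathcal{A} \to \operatorname{coh}X$ from Lemma~\ref{lem:coherent_sheaves_exact_on_module_categories} to the level of bounded derived categories, and verify that the induced functor is exact (i.e. triangulated). The key observation is that $\pi_\ast$ is already \emph{exact} as a functor of abelian categories, so no derivation in the homological sense is actually needed: restriction of scalars along $\pi$ does not alter the underlying sheaves of abelian groups, it merely forgets the $\mathcal{A}$-action down to the $\mathcal{O}_X$-action. An exact functor between abelian categories sends quasi-isomorphisms to quasi-isomorphisms and short exact sequences to short exact sequences, hence it prolongs termwise to a functor on complexes that descends to the derived categories without needing to pass to injective or flat resolutions.

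Concretely, first I would note that $\pi_\ast \colon \operatorname{coh}\mathcal{A} \to \operatorname{coh}X$ extends to a functor on the categories of bounded complexes by applying it componentwise; this is well-defined on $D^b_{\operatorname{coh}}(\mathcal{A})$ because $\pi_\ast$ preserves boundedness and coherence (again via Remark~\ref{rmk:ELS_lemma_quasi_coherent_modules}). Second, I would verify that $\pi_\ast$ preserves the triangulated structure: it commutes with the shift functor obviously (shifting a complex and then forgetting scalars is the same as forgetting and then shifting), and it sends distinguished triangles to distinguished triangles. For the latter, every distinguished triangle in $D^b_{\operatorname{coh}}(\mathcal{A})$ is isomorphic to one arising from a short exact sequence of complexes (or, via mapping cones, from a componentwise construction), and since $\pi_\ast$ is exact and commutes with the formation of mapping cones—because the cone is built out of direct sums and the differential, all of which are preserved by the forgetful functor—the image is again a distinguished triangle in $D^b_{\operatorname{coh}}(X)$.

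The cleanest way to phrase this is to invoke that an exact functor of abelian categories induces an exact (triangulated) functor on bounded derived categories; this is a standard fact, and the earlier Lemma~\ref{lem:coherent_sheaves_exact_on_module_categories} supplies exactly the hypothesis that $\pi_\ast$ is exact on $\operatorname{coh}\mathcal{A}$. One can also argue locally, reducing to $X = \operatorname{Spec}(R)$ with $\mathcal{A}$ replaced by a finite $R$-algebra $S$, where the statement becomes that restriction of scalars $\operatorname{mod}S \to \operatorname{mod}R$ induces $D^b_{\operatorname{coh}}(S) \to D^b_{\operatorname{coh}}(R)$; but the global functorial formulation is cleaner and avoids gluing issues.

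I do not anticipate a genuine obstacle here, as the content is essentially bookkeeping once exactness is in hand. The only point requiring a modicum of care is confirming that $\pi_\ast$ really need not be derived—that is, that the naive termwise application already lands in $D^b_{\operatorname{coh}}(X)$ and respects quasi-isomorphisms. This follows precisely because $\pi_\ast$ is exact (not merely left or right exact), so I would make sure to cite Lemma~\ref{lem:coherent_sheaves_exact_on_module_categories} explicitly and remark that exactness is what allows one to drop the derived decoration $\mathbb{R}\pi_\ast$ in favor of the underived $\pi_\ast$ at the level of bounded derived categories.
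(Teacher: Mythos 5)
Your proposal is correct and matches the paper's proof: both invoke Lemma~\ref{lem:coherent_sheaves_exact_on_module_categories} for exactness of $\pi_\ast$ on $\operatorname{coh}\mathcal{A}$, apply it componentwise to complexes, and use exactness to conclude that the induced functor on derived categories is triangulated and preserves bounded coherent cohomology with no derived decoration needed. The paper states this more tersely, while you spell out the standard verification (preservation of quasi-isomorphisms, shifts, and cones), but the argument is the same.
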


\begin{proof}
    Note the canonical map $\pi \colon \mathcal{O}_X \to \mathcal{A}$ induces a exact functor $\pi_\ast \colon \operatorname{coh}\mathcal{A} \to \operatorname{coh}X$, see Lemma~\ref{lem:coherent_sheaves_exact_on_module_categories}. There exists an induced functor on derived categories $D_{\operatorname{coh}}(\mathcal{A})\to D_{\operatorname{coh}}(X)$, which we will also denote by $\pi_\ast$\footnote{$D_{\operatorname{coh}}(\mathcal{A})\to D_{\operatorname{coh}}(X)$ is defined by applying $\pi_\ast$ component wise on complex in $D_{\operatorname{coh}}(\mathcal{A})$.}. To see $\pi_\ast$ preserves bounded cohomology, this follows from the fact that $\pi$ is exact.
\end{proof}

\begin{remark}\label{rmk:classical_generation_from_irreducible_components}
    Let $X$ be a Noetherian scheme. If $X=\bigcup^n_{i=1} Z_i$ denotes the maximal irreducible components and $G_i \in D^b_{\operatorname{coh}}(Z_i)$ is a classical generator for each $1\leq i \leq n$, then $\oplus_{i=1}^n \mathbb{R} \pi_{i,\ast} G_i$ is a classical generator for $D^b_{\operatorname{coh}}(X)$ where $\pi_i \colon Z_i \to X$ is the closed immersion. This can be shown from \cite[Example 3.4]{Dey/Lank:2024}.
\end{remark}

\begin{theorem}\label{thm:descent_for_coherent_algebras}
    Let $X$ be a Noetherian $J\textrm{-}2$ scheme of finite Krull dimension. Suppose $\mathcal{A}$ is a coherent $\mathcal{O}_X$-algebra with full support and canonical map $\pi \colon \mathcal{O}_X \to \mathcal{A}$. If $G$ is classical generator for $D^b_{\operatorname{coh}}(\mathcal{A})$, then $\mathbb{R}\pi_\ast G$ is a classical generator for $D^b_{\operatorname{coh}}(X)$.
\end{theorem}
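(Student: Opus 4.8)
The plan is to reduce the assertion to the statement that the essential image of the exact functor $\pi_\ast\colon D^b_{\operatorname{coh}}(\mathcal{A})\to D^b_{\operatorname{coh}}(X)$ from Lemma~\ref{lem:cohernet_sheaves_exact_triangulated} (note $\mathbb{R}\pi_\ast=\pi_\ast$ here, since $\pi_\ast$ is exact) generates $D^b_{\operatorname{coh}}(X)$ as a thick subcategory. Indeed, because $\pi_\ast$ is triangulated and $\langle G\rangle = D^b_{\operatorname{coh}}(\mathcal{A})$, we automatically have $\pi_\ast D^b_{\operatorname{coh}}(\mathcal{A})\subseteq\langle\pi_\ast G\rangle$, so it suffices to locate a classical generator of $D^b_{\operatorname{coh}}(X)$ inside this image. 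I would then induct on $\dim X$. Two preliminary dévissages allow me to assume $X$ is integral: first, filtering by powers of the nilradical identifies $D^b_{\operatorname{coh}}(X)$ with $\langle \iota_\ast D^b_{\operatorname{coh}}(X_{\mathrm{red}})\rangle$ for the reduction $\iota\colon X_{\mathrm{red}}\to X$, and the commutative square of Remark~\ref{rmk:canonical_morphism} together with Remark~\ref{rmk:nc_closed_immersion} shows $\iota_\ast\theta_\ast(-)=\pi_\ast\iota_\ast(-)$ lands in $\langle\pi_\ast G\rangle$; second, Remark~\ref{rmk:classical_generation_from_irreducible_components} reduces a reduced $X$ to its integral components. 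Throughout I use that restriction $\mathcal{A}|_{Z}=i^\ast\mathcal{A}$ again has \emph{full support}, since $\operatorname{Supp}(i^\ast\mathcal{A})=\operatorname{Supp}(\mathcal{A})\cap Z=Z$, that $J\textrm{-}2$ and finite Krull dimension pass to closed subschemes, and that the needed noncommutative classical generators exist by Example~\ref{ex:ELS_j2_classical_generator}.

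For the heart of the argument, assume $X$ is integral with generic point $\eta$ and function field $K$. Full support forces $\mathcal{A}_\eta\neq 0$, so the coherent $\mathcal{O}_X$-module $\pi_\ast\mathcal{A}$ has generic rank $r\geq 1$. Since $X$ is $J\textrm{-}2$ (hence $J\textrm{-}1$) and integral, its regular locus is open and dense, so I may choose a dense affine open $U=\operatorname{Spec}(R)\subseteq X$ that is regular and on which $\pi_\ast\mathcal{A}|_U\cong\mathcal{O}_U^{\oplus r}$. On such a $U$, regularity gives $D^b_{\operatorname{coh}}(U)=\operatorname{Perf}(U)=\langle\mathcal{O}_U\rangle$, so the single object $\pi_\ast\mathcal{A}|_U=j^\ast\pi_\ast\mathcal{A}$ already generates $D^b_{\operatorname{coh}}(U)$, where $j\colon U\to X$.

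Finally I would globalize using the localization sequence of Remark~\ref{rmk:nc_verdier_localization} in the commutative case $\mathcal{A}=\mathcal{O}_X$, namely $D^b_{\operatorname{coh},Z}(X)\to D^b_{\operatorname{coh}}(X)\xrightarrow{j^\ast}D^b_{\operatorname{coh}}(U)$ with $Z=X\setminus U$. Given any $F\in D^b_{\operatorname{coh}}(X)$, the previous paragraph gives $j^\ast F\in\langle j^\ast\pi_\ast\mathcal{A}\rangle$, and lifting generation through the Verdier quotient yields $F\in\langle\pi_\ast\mathcal{A}\cup D^b_{\operatorname{coh},Z}(X)\rangle$. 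Since $\pi_\ast\mathcal{A}\in\langle\pi_\ast G\rangle$, it remains to absorb $D^b_{\operatorname{coh},Z}(X)$: as $Z$ is a proper closed subset of the irreducible $X$ we have $\dim Z<\dim X$, so the inductive hypothesis applies to $(Z,\mathcal{A}|_Z)$; choosing a classical generator $H_Z$ of $D^b_{\operatorname{coh}}(\mathcal{A}|_Z)$ (Example~\ref{ex:ELS_j2_classical_generator}), the induction makes $\theta_\ast H_Z$ a classical generator of $D^b_{\operatorname{coh}}(Z)$, and dévissage along the ideal of $Z$ gives $D^b_{\operatorname{coh},Z}(X)=\langle i_\ast D^b_{\operatorname{coh}}(Z)\rangle=\langle i_\ast\theta_\ast H_Z\rangle$. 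The commutative square of Remark~\ref{rmk:canonical_morphism} and Remark~\ref{rmk:nc_closed_immersion} identify $i_\ast\theta_\ast H_Z=\pi_\ast i_\ast H_Z$ with an object of $\pi_\ast D^b_{\operatorname{coh}}(\mathcal{A})\subseteq\langle\pi_\ast G\rangle$, so $D^b_{\operatorname{coh},Z}(X)\subseteq\langle\pi_\ast G\rangle$ and therefore $F\in\langle\pi_\ast G\rangle$.

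I expect the main obstacle to be twofold. The genuinely technical point is the lifting of classical generation through the Verdier quotient $D^b_{\operatorname{coh}}(X)\to D^b_{\operatorname{coh}}(U)$: one must verify that the retracts and cones witnessing $j^\ast F\in\langle j^\ast\pi_\ast\mathcal{A}\rangle$ can be realized modulo objects supported on $Z$, i.e. that $\langle\pi_\ast\mathcal{A}\cup D^b_{\operatorname{coh},Z}(X)\rangle$ is precisely the preimage of $\langle j^\ast\pi_\ast\mathcal{A}\rangle$. The organizational point is to arrange the two dévissages and the dimension induction so that the integral case of dimension $d$ only ever invokes the theorem in dimension $<d$, avoiding circularity; keeping track of the compatibility $\pi_\ast i_\ast=i_\ast\theta_\ast$ from Remark~\ref{rmk:canonical_morphism} is exactly what makes every transported generator land back inside $\langle\pi_\ast G\rangle$.
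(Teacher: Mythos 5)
Your proposal is correct, and at the level of architecture it is the paper's proof: reduce to the integral case via irreducible components, produce a generator of $D^b_{\operatorname{coh}}(U)$ for a regular affine open $j\colon U\to X$, lift generation through the Verdier localization of Remark~\ref{rmk:nc_verdier_localization}, and absorb objects supported on the closed complement by induction, using the compatibility $\pi_\ast i_\ast = i_\ast\theta_\ast$. The differences lie in how three steps are implemented, and each of your substitutions works. (1) You induct on Krull dimension, the paper on the Noetherian poset of closed subschemes; your version additionally uses the fact that a proper closed subset of an irreducible finite-dimensional space has strictly smaller dimension (true, but it should be stated), whereas Noetherian induction needs no such input. (2) On $U$, the paper invokes \cite[Lemma 1.2]{Neeman:1992} for the perfect complex $j^\ast\pi_\ast G$, which requires knowing $\pi_\ast G$ has full support---a point the paper asserts without justification, though it follows because the support of a classical generator contains the support of every object it builds, e.g.\ of $\pi_\ast\mathcal{A}$. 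Your route via generic freeness, making $j^\ast\pi_\ast\mathcal{A}$ free of positive rank so that $\langle j^\ast\pi_\ast\mathcal{A}\rangle=\langle\mathcal{O}_U\rangle=D^b_{\operatorname{coh}}(U)$, is more elementary and sidesteps that justification entirely, at the cost of shrinking $U$; note it uses finiteness of the Krull dimension of $U$ so that regularity gives finite global dimension, which is available here. (3) The paper does not build arbitrary objects directly: it fixes a classical generator $E$ of $D^b_{\operatorname{coh}}(X)$ (which exists by \cite[Theorem 4.15]{Elagin/Lunts/Schnurer:2020}, using the $J\textrm{-}2$ hypothesis) and shows $\pi_\ast G$ finitely builds $E$, citing \cite[Lemma 4.1.2]{Lank:2024} for precisely the step you flag as your main obstacle. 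That flagged step is not a genuine gap: it is the standard bijection between thick subcategories of a Verdier quotient $\mathcal{T}/\mathcal{S}$ and thick subcategories of $\mathcal{T}$ containing $\mathcal{S}$, under which the preimage of $\langle j^\ast\pi_\ast\mathcal{A}\rangle$ is exactly the thick subcategory generated by $\pi_\ast\mathcal{A}$ together with $D^b_{\operatorname{coh},Z}(X)$; citing this general fact, or the same lemma the paper uses, closes your argument. Finally, your separate nilradical d\'evissage is harmless but redundant, since Remark~\ref{rmk:classical_generation_from_irreducible_components} already covers non-reduced $X$.
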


\begin{proof}
    Any closed subscheme $Z$ of $X$ is Noetherian $J\textrm{-}2$. We will prove the claim by Notherian induction on $X$. If $X=\emptyset$, there is nothing to check, so we can impose that $X$ is non-empty. Without loss of generality, we can assume the claim holds for any properly contained closed subscheme $Z$ of $X$. 

    Consider the case where $X$ is an integral scheme. Then
    $\operatorname{reg}(X)$ is a nonempty open subscheme of $X$ as it contains the generic point. Let $j\colon U
    \to X$ be an open immersion, with $U$ an affine open subscheme contained in
    $\operatorname{reg}(X)$. There is a a classical generator $E$ for
    $D^b_{\operatorname{coh}}(X)$ because $X$ is $J\textrm{-}2$, see \cite[Theorem 4.15]{Elagin/Lunts/Schnurer:2020}. Note that $j^\ast \pi_\ast G$ is a perfect complex on $U$ with full support because $U$ is regular and $\pi_\ast  G$ has full support on $X$. This tells us that $j^\ast \pi_\ast G$ is a classical
    generator for $D^b_{\operatorname{coh}}(U)$, see \cite[Lemma 1.2]{Neeman:1992}.
    Thus, $j^\ast \pi_\ast G$ finitely builds $j^\ast E$ in
    $D^b_{\operatorname{coh}}(U)$. Consequently, there exists $A\in
    D^b_{\operatorname{coh}}(Z)$, where $Z$ is a closed subscheme contained in
    $X\setminus U$, with a closed immersion $i\colon Z \to X$, such that $E$ is
    finitely built by $\pi_\ast G\oplus i_\ast A$, see \cite[Lemma 4.1.2]{Lank:2024}. 
    
    Next, we demonstrate that $\pi_\ast G$
    finitely builds $i_\ast A$ in $D^b_{\operatorname{coh}}(X)$. Let $\theta\colon \mathcal{O}_Z \to i^\ast \mathcal{A}$ be the structure morphism. There exists a commutative diagram of exact functors\footnote{This follows from Remark~\ref{rmk:canonical_morphism}, \cite[Proposition 3.12]{Burban/Drozd/Gavran:2017}, and Lemma~\ref{lem:cohernet_sheaves_exact_triangulated}.}:
    \begin{displaymath}
        \begin{tikzcd}
            {D^b_{\operatorname{coh}}(Z)} & {D^b_{\operatorname{coh}}(X)} \\
            {D^b_{\operatorname{coh}}(i^\ast \mathcal{A})} & {D^b_{\operatorname{coh}}(\mathcal{A})}.
            \arrow["{i_\ast}", from=1-1, to=1-2]
            \arrow["{i_\ast}", from=2-1, to=2-2]
            \arrow["{\pi_\ast}", from=2-2, to=1-2]
            \arrow["{\theta_\ast}", from=2-1, to=1-1]
        \end{tikzcd}
    \end{displaymath}
    Note that $D^b_{\operatorname{coh}}(i^\ast \mathcal{A})$ admits a classical because $Z$ is $J\textrm{-}2$ and $i^\ast \mathcal{A}$ is a coherent $\mathcal{O}_Z$-algebra, see \cite[Theorem 4.15]{Elagin/Lunts/Schnurer:2020}. Since $Z$ is a properly contained subscheme of $X$, the induction hypothesis ensures that if $G^\prime$ is a classical generator for $D^b_{\operatorname{coh}}(i^\ast \mathcal{A})$, then $\theta_\ast G^\prime$ is a classical generator for $D^b_{\operatorname{coh}}(Z)$. Consequently, $(i \circ \theta)_\ast G^\prime$ finitely builds $i_\ast A$ in $D^b_{\operatorname{coh}}(X)$. Moreover, since $G$ finitely builds $i_\ast  G^\prime$ in $D^b_{\operatorname{coh}}(\mathcal{A})$, we have that $\pi_\ast G$ finitely builds $(\pi \circ i)_\ast G^\prime$ in $D^b_{\operatorname{coh}}(X)$. Hence, $\pi_\ast G$ finitely builds $i_\ast A$ in $D^b_{\operatorname{coh}}(X)$ and, consequently, finitely builds $E$.

    Now we work in the general case where $X$ is not integral. It suffices to show that $i_\ast D^b_{\operatorname{coh}}(Z)$ is contained in $\langle\pi_\ast D^b_{\operatorname{coh}}(\mathcal{A}) \rangle$ for each closed immersion $i\colon Z \to X$ from an irreducible component, see Remark~\ref{rmk:classical_generation_from_irreducible_components}. Let $\theta\colon \mathcal{O}_Z \to i^\ast \mathcal{A}$ be the structure morphism. Our work above tells us that $\langle \theta_\ast D^b_{\operatorname{coh}}(i^\ast \mathcal{A}) \rangle = D^b_{\operatorname{coh}}(Z)$ because $Z$ is integral. However, we know that $i_\ast D^b_{\operatorname{coh}}(i^\ast \mathcal{A})$ is contained in $D^b_{\operatorname{coh}}(\mathcal{A})$ (see Remark~\ref{rmk:nc_closed_immersion}), and so, $i_\ast D^b_{\operatorname{coh}}(Z)$ belongs to $\langle \pi_\ast D^b_{\operatorname{coh}}(\mathcal{A}) \rangle$ because $i_\ast \circ \theta_\ast = \pi_\ast \circ i_\ast$. This completes the proof.
\end{proof}

\begin{corollary}
    Let $f \colon Y \to X$ be a proper surjective morphism of Noetherian $J\textrm{-}2$ schemes of finite Krull dimension. Suppose $\mathcal{A}$ is a coherent $\mathcal{O}_Y$-algebra with canonical map $\pi \colon \mathcal{O}_Y \to \mathcal{A}$. If $G$ is a classical generator for $D^b_{\operatorname{coh}}(\mathcal{A})$, then $\mathbb{R}(f \circ \pi)_\ast G$ is a classical generator for $D^b_{\operatorname{coh}}(X)$.
\end{corollary}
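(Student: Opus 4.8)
The plan is to factor the functor $\mathbb{R}(f\circ\pi)_\ast$ into two descents we already control. The underlying scheme morphism of the structure morphism $\pi\colon (Y,\mathcal{A})\to (Y,\mathcal{O}_Y)$ is the identity on $Y$, so after forgetting $\mathcal{A}$-module structures down to $\mathcal{O}_Y$-modules the composite $f\circ\pi$ pushes a coherent $\mathcal{A}$-module first through the exact restriction-of-scalars functor $\pi_\ast$ of Lemma~\ref{lem:cohernet_sheaves_exact_triangulated} and then through $\mathbb{R}f_\ast$. Since $\pi_\ast$ is exact we have $\mathbb{R}\pi_\ast=\pi_\ast$, and by functoriality of derived pushforward
\begin{displaymath}
    \mathbb{R}(f\circ\pi)_\ast \cong \mathbb{R}f_\ast \circ \pi_\ast \colon D^b_{\operatorname{coh}}(\mathcal{A}) \to D^b_{\operatorname{coh}}(X).
\end{displaymath}
Thus it suffices to descend along $\pi$ to $D^b_{\operatorname{coh}}(Y)$ and then along $f$ to $D^b_{\operatorname{coh}}(X)$.

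First I would apply Theorem~\ref{thm:descent_for_coherent_algebras} to the noncommutative scheme $(Y,\mathcal{A})$: since $Y$ is Noetherian $J\textrm{-}2$ of finite Krull dimension and $\mathcal{A}$ is a coherent $\mathcal{O}_Y$-algebra of full support, the theorem yields that $\pi_\ast G$ is a classical generator for $D^b_{\operatorname{coh}}(Y)$. This is where the full-support hypothesis on $\mathcal{A}$ is used; without it $\pi_\ast G$ would only generate the subcategory supported on $\operatorname{supp}\mathcal{A}$, so one should read the statement with $\mathcal{A}$ of full support on $Y$.

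Next I would invoke the commutative descent statement for proper surjective morphisms from \cite{Lank:2024, Dey/Lank:2024}: because $f\colon Y\to X$ is proper and surjective between Noetherian $J\textrm{-}2$ schemes of finite Krull dimension — so that $\mathbb{R}f_\ast$ lands in $D^b_{\operatorname{coh}}(X)$ and the latter admits a classical generator by \cite[Theorem 4.15]{Elagin/Lunts/Schnurer:2020} — the derived pushforward $\mathbb{R}f_\ast$ carries the classical generator $\pi_\ast G$ of $D^b_{\operatorname{coh}}(Y)$ to a classical generator of $D^b_{\operatorname{coh}}(X)$. Combining this with the factorization above, $\mathbb{R}(f\circ\pi)_\ast G\cong \mathbb{R}f_\ast(\pi_\ast G)$ is a classical generator for $D^b_{\operatorname{coh}}(X)$, as claimed.

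The main obstacle is the second step: verifying that $\mathbb{R}f_\ast$ preserves classical generators along an arbitrary proper surjective $f$, and not merely along a birational resolution. I expect this to go by the same Noetherian-induction scheme as Theorem~\ref{thm:descent_for_coherent_algebras} itself — reduce to the case where $X$ is integral, restrict to an affine open $U$ contained in $\operatorname{reg}(X)$ where $\mathbb{R}f_\ast G$ becomes a perfect complex of full support and hence a generator by \cite[Lemma 1.2]{Neeman:1992}, then use the projection formula together with the localization sequence to trade the complement for an object supported on a proper closed subscheme and close the induction. The delicate point is confirming that surjectivity of $f$, applied to the full support of the generator $G$ on $Y$, forces $\mathbb{R}f_\ast G$ to have full support on $X$ so that its restriction over the regular locus is genuinely a generator, and that the requisite projection-formula and base-change identities are valid in this bounded coherent setting.
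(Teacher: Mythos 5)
Your proposal is correct and is essentially the paper's own proof: the paper's entire argument reads ``This is Theorem~\ref{thm:descent_for_coherent_algebras} coupled with \cite[Corollary 3.12]{Dey/Lank:2024}'', which is precisely your factorization $\mathbb{R}(f\circ\pi)_\ast\cong\mathbb{R}f_\ast\circ\pi_\ast$ followed by noncommutative descent along $\pi$ and commutative descent along $f$. The second step you flag as the main obstacle is exactly that citation, which holds for arbitrary proper surjective morphisms (not only resolutions of singularities), so no new induction is needed; and your observation that the full-support hypothesis on $\mathcal{A}$ must be read into the statement is a correct catch, since Theorem~\ref{thm:descent_for_coherent_algebras} requires it while the corollary as printed omits it.
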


\begin{proof}
    This is Theorem~\ref{thm:descent_for_coherent_algebras} coupled with \cite[Corollary 3.12]{Dey/Lank:2024}.
\end{proof}

\begin{lemma}\label{lem:noncommutative_projection_formula}
    Let $R,S$ be Noetherian rings where $R$ is commutative and $\phi \colon R \to S$ be a ring homomorphism. If $E\in D_{\operatorname{Qcoh}}(R)$ and $A\in D_{\operatorname{Qcoh}}(S)$ is a complex of $(S,S)$-bimodules, then there exists a isomorphism in $D_{\operatorname{Qcoh}}(R)$:
    \begin{displaymath}
        E \overset{\mathbb{L}}{\otimes}_R \phi_\ast A \cong \phi_\ast (\mathbb{L}\phi^\ast E \overset{\mathbb{L}}{\otimes}_S A).
    \end{displaymath}
\end{lemma}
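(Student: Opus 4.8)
The plan is to prove this noncommutative projection formula by reducing it to the classical (commutative) projection formula through careful bookkeeping of the bimodule structure. The statement asserts an isomorphism
\[
    E \overset{\mathbb{L}}{\otimes}_R \phi_\ast A \cong \phi_\ast (\mathbb{L}\phi^\ast E \overset{\mathbb{L}}{\otimes}_S A)
\]
in $D_{\operatorname{Qcoh}}(R)$, where $A$ carries an $(S,S)$-bimodule structure. The central point is that the right-hand tensor product $\mathbb{L}\phi^\ast E \overset{\mathbb{L}}{\otimes}_S A$ is formed over $S$ using the \emph{left} $S$-module structure on $A$, which consumes $\mathbb{L}\phi^\ast E$; the result then retains the \emph{right} $S$-module structure of $A$, so that applying $\phi_\ast$ and comparing with the left-hand side over $R$ makes sense.

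First I would reduce to a statement about a single honest bimodule that is flat or to a comparison of derived functors via a natural transformation, bypassing explicit complexes. Concretely, I would construct a natural map between the two functors (in, say, the variable $E$, or in both variables) and then check it is an isomorphism. The cleanest route is to build the natural transformation at the level of underived tensor products when $A$ is a single projective (or flat) right $S$-module and $E$ is a single $R$-module, using the associativity/base-change isomorphism $(E \otimes_R S) \otimes_S A \cong E \otimes_R (S \otimes_S A) \cong E \otimes_R A$ of abelian groups, and then verifying this underlying isomorphism respects the residual right $S$-module structures (hence descends correctly after $\phi_\ast$). Note $\mathbb{L}\phi^\ast E = E \overset{\mathbb{L}}{\otimes}_R S$ with $S$ viewed as an $(R,S)$-bimodule, and the left $S$-action on this, used in the tensor over $S$ against $A$, is exactly what collapses $S \otimes_S A$ back to $A$.

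Then I would promote this to the derived level. The standard technique is to resolve: replace $E$ by a K-flat complex of $R$-modules (to compute $\mathbb{L}\phi^\ast$ and $\overset{\mathbb{L}}{\otimes}_R$), and replace $A$ by a K-flat complex of $(S,S)$-bimodules, or equivalently argue that both sides send quasi-isomorphisms to quasi-isomorphisms and agree on a generating class, so the underived natural isomorphism upgrades to a derived one. Since $\phi_\ast$ is exact (restriction of scalars is exact, as noted in the proof of Lemma~\ref{lem:coherent_sheaves_exact_on_module_categories}), it commutes with passage to cohomology and with the derived category formalism without needing further resolution. A way-out-style argument (reduce to bounded-above complexes, then to single modules, then to flat modules) would let me conclude the derived natural transformation is an isomorphism once the module-level case is settled.

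The main obstacle, and the reason the bimodule hypothesis is essential, is keeping the left and right $S$-actions bookkept correctly so that the map is well defined as a map of $R$-modules and is natural. The tensor product $\mathbb{L}\phi^\ast E \overset{\mathbb{L}}{\otimes}_S A$ uses the left $S$-structure on $A$, but the object that survives and gets pushed forward by $\phi_\ast$ is an $R$-module via the right $S$-structure; without $A$ being a genuine bimodule there is no right action to define $\phi_\ast$ of the tensor product, and the formula would not typecheck. I expect the real care to lie in verifying that the natural associativity isomorphism of underlying abelian groups is $S$-linear for the correct (right) action and $R$-linear after restriction, rather than in any deep homological input; once that compatibility is pinned down, the derived upgrade is routine.
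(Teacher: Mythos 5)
Your proposal is correct and in substance coincides with the paper's proof: the paper likewise realizes restriction and extension of scalars as tensor functors, $\phi_\ast(-) = (-) \overset{\mathbb{L}}{\otimes}_S (S_R)$ and $\mathbb{L}\phi^\ast(-) = (-)\overset{\mathbb{L}}{\otimes}_R S$, and concludes by the very associativity/unit chain you identify, $((E\overset{\mathbb{L}}{\otimes}_R S)\overset{\mathbb{L}}{\otimes}_S A)\overset{\mathbb{L}}{\otimes}_S (S_R) \cong E\overset{\mathbb{L}}{\otimes}_R((S\overset{\mathbb{L}}{\otimes}_S A)\overset{\mathbb{L}}{\otimes}_S (S_R)) \cong E\overset{\mathbb{L}}{\otimes}_R \phi_\ast A$, using exactly the bimodule bookkeeping (left $S$-action consumed by the tensor, right $S$-action surviving for $\phi_\ast$) that you emphasize. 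The only difference is presentational: the paper invokes associativity of derived tensor products of bimodule complexes directly in the derived category, whereas you reconstruct it from the underived isomorphism via K-flat resolutions, which is precisely how that standard fact is established.
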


\begin{proof}
    We can realize $\phi_\ast\colon D_{\operatorname{Qcoh}}(S) \to D_{\operatorname{Qcoh}}(R)$ and $\mathbb{L}\phi^\ast \colon D_{\operatorname{Qcoh}}(R) \to D_{\operatorname{Qcoh}}(S)$ as the derived tensor product functors\footnote{These are respectively the derived functors for restriction and extension of scalars.}:
    \begin{displaymath}
        \begin{aligned}
            \phi_\ast(-) &= (-) \overset{\mathbb{L}}{\otimes_S} (S_R),
            \\ \mathbb{L} \phi^\ast(-) &=(-)\overset{\mathbb{L}}{\otimes}_R S,
        \end{aligned}
    \end{displaymath}
    where $S_R$ denotes $S$ as a right $R$-module via $\phi$. There exists an isomorphism in $D_{\operatorname{Qcoh}}(R)$:
    \begin{displaymath}
        \begin{aligned}
            \phi_\ast ( \mathbb{L} \phi^\ast E \overset{\mathbb{L}}{\otimes}_S A) &\cong((E\overset{\mathbb{L}}{\otimes}_R S) \overset{\mathbb{L}}{\otimes}_S A)\overset{\mathbb{L}}{\otimes}_S (S_R) 
            \\&\cong E\overset{\mathbb{L}}{\otimes}_R ((S \overset{\mathbb{L}}{\otimes}_S A)\overset{\mathbb{L}}{\otimes}_S (S_R))
            \\&\cong E\overset{\mathbb{L}}{\otimes}_R (A\overset{\mathbb{L}}{\otimes}_S (S_R))
            \\&\cong E \overset{\mathbb{L}}{\otimes}_R \pi_\ast A
        \end{aligned}
    \end{displaymath}
\end{proof}

\begin{corollary}\label{cor:rouquier_bound_nc_descent}
    Let $R$ be a commutative Noetherian $J\textrm{-}2$ ring of finite Krull dimension and $\pi \colon R \to S$ a finite ring morphism such that $\pi_\ast S$ has full support as an $R$-module. If $G$ is a classical generator for $D^b_{\operatorname{coh}}(S)$ which is a bounded complex of $(S,S)$-bimodules, then
    \begin{displaymath}
        \dim D^b_{\operatorname{coh}}(R)\leq \operatorname{level}^{\pi_\ast G} (R) \cdot \big(\dim D^b_{\operatorname{coh}}(S) + 1\big) - 1.
    \end{displaymath}
\end{corollary}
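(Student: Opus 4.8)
The plan is to exhibit an explicit strong generator of $D^b_{\operatorname{coh}}(R)$ whose generation time realizes the asserted bound, transporting generation from the $S$-side via the projection formula of Lemma~\ref{lem:noncommutative_projection_formula}. Write $\mathcal{A}$ for the coherent $\mathcal{O}_{\operatorname{Spec} R}$-algebra attached to the finite $R$-algebra $S$, so that $D^b_{\operatorname{coh}}(S)=D^b_{\operatorname{coh}}(\mathcal{A})$ and $\pi_\ast$ is restriction of scalars. Since $\pi_\ast S$ has full support, Theorem~\ref{thm:descent_for_coherent_algebras} applies and shows that $\pi_\ast G$ is a classical generator of $D^b_{\operatorname{coh}}(R)$; in particular $\ell:=\operatorname{level}^{\pi_\ast G}(R)$ is finite, i.e. $R\in\langle\pi_\ast G\rangle_\ell$. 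We may assume $d:=\dim D^b_{\operatorname{coh}}(S)<\infty$ (otherwise there is nothing to prove) and fix a strong generator $H$ with $\langle H\rangle_{d+1}=D^b_{\operatorname{coh}}(S)$. The goal then reduces to showing $\langle\pi_\ast H\rangle_{\ell(d+1)}=D^b_{\operatorname{coh}}(R)$.

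Fix $M\in D^b_{\operatorname{coh}}(R)$. Applying the exact endofunctor $M\overset{\mathbb{L}}{\otimes}_R(-)$ to the relation $R\in\langle\pi_\ast G\rangle_\ell$ and using $M\overset{\mathbb{L}}{\otimes}_R R\cong M$ yields $M\in\langle M\overset{\mathbb{L}}{\otimes}_R\pi_\ast G\rangle_\ell$. Because $G$ is a bounded complex of $(S,S)$-bimodules, Lemma~\ref{lem:noncommutative_projection_formula} identifies $M\overset{\mathbb{L}}{\otimes}_R\pi_\ast G\cong\pi_\ast N$, where $N:=\mathbb{L}\pi^\ast M\overset{\mathbb{L}}{\otimes}_S G$; hence $M\in\langle\pi_\ast N\rangle_\ell$.

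The difficulty, and the step I expect to be the main obstacle, is that $N$ need not be bounded: as $S$ may have infinite Tor-dimension over $R$ and $G$ is not perfect, $N$ lies only in $D^-_{\operatorname{coh}}(S)$, and the Rouquier dimension of $D^b_{\operatorname{coh}}(S)$ cannot be applied to it directly. I would resolve this by a truncation argument. For $c\ll 0$ set $N':=\tau^{\ge c}N\in D^b_{\operatorname{coh}}(S)$; the canonical map $N\to N'$ has cone concentrated in cohomological degrees $\le c$. Carrying this replacement through the finitely many shifts and cones that witness $M\in\langle\pi_\ast N\rangle_\ell$ produces an object built in $\ell$ steps from $\pi_\ast N'$, together with a comparison triangle whose third term has cohomology in degrees $\le c+C_0$, where $C_0$ depends only on the finite building data. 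Choosing $c$ far below the cohomological range of $M$ places the relevant $\operatorname{Hom}$-group in the vanishing region of the standard $t$-structure, so the retraction expressing $M$ lifts along the comparison map; hence $M$ is a retract of the $\ell$-step build from $\pi_\ast N'$, that is, $M\in\langle\pi_\ast N'\rangle_\ell$.

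Finally, $N'\in D^b_{\operatorname{coh}}(S)=\langle H\rangle_{d+1}$, and exactness of $\pi_\ast$ gives $\pi_\ast N'\in\langle\pi_\ast H\rangle_{d+1}$. Combining $M\in\langle\pi_\ast N'\rangle_\ell$ with $\pi_\ast N'\in\langle\pi_\ast H\rangle_{d+1}$ through the standard composition estimate $\langle\langle\mathcal{S}\rangle_{d+1}\rangle_\ell\subseteq\langle\mathcal{S}\rangle_{\ell(d+1)}$ (see \cite{Rouquier:2008, ABIM:2010}) yields $M\in\langle\pi_\ast H\rangle_{\ell(d+1)}$. As $M$ was arbitrary, $\langle\pi_\ast H\rangle_{\ell(d+1)}=D^b_{\operatorname{coh}}(R)$, so $\pi_\ast H$ is a strong generator and $\dim D^b_{\operatorname{coh}}(R)\le\ell(d+1)-1=\operatorname{level}^{\pi_\ast G}(R)\cdot\big(\dim D^b_{\operatorname{coh}}(S)+1\big)-1$.
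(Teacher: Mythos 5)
Your overall skeleton — descent (Theorem~\ref{thm:descent_for_coherent_algebras}) to get $\pi_\ast G$ classical with finite level $\ell$, tensoring the relation $R\in\langle\pi_\ast G\rangle_\ell$ against a test object, the projection formula of Lemma~\ref{lem:noncommutative_projection_formula}, strong generation on the $S$-side, and the composition estimate — is exactly the paper's, and your decoupling of the bimodule generator $G$ from a strong generator $H$ realizing $\dim D^b_{\operatorname{coh}}(S)$ is actually cleaner than the paper's write-up. But the step you yourself flag as the main obstacle is where the proof breaks. With a \emph{single} truncation degree $c$, the claim that one can ``carry the replacement through'' the build of $M$ from $\pi_\ast N$ to produce an object of $\langle\pi_\ast N'\rangle_\ell$ together with a comparison triangle with third term in degrees $\le c+C_0$ is unjustified: cones are not functorial, so the comparison must be constructed inductively along the filtration, and at each stage the obstruction to extending it is a class in $\operatorname{Hom}\bigl(\text{(truncation garbage)},\,E'_{i-1}[1]\bigr)$, where the garbage (sums of shifts of $\tau^{<c}N$) has cohomology in degrees $\le c+C_0$ while the partial $N'$-build $E'_{i-1}$ has cohomology reaching \emph{down} to about $c-C_0$, since $N'=\tau^{\ge c}N$ itself starts in degree $c$. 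This Hom-group is not in the vanishing range of the standard $t$-structure — indeed the connecting map $\tau^{\ge c}N\to\tau^{<c}N[1]$ of the truncation triangle is typically nonzero — so the comparison object you need cannot be produced this way. The $t$-structure vanishing you invoke is valid only for the \emph{last} step (lifting the retraction once a comparison with low-degree cone exists); it does not produce the comparison.

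This gap is precisely the content the paper outsources to a citation: the paper only ever tensors with \emph{perfect} complexes $P$, so that $\mathbb{L}\pi^\ast P\overset{\mathbb{L}}{\otimes}_S G$ is already in $D^b_{\operatorname{coh}}(S)$ and no truncation is needed; this yields $\operatorname{perf}(R)\subseteq\langle\pi_\ast G\rangle_{\ell(d+1)}$, and the upgrade from perfect complexes to all of $D^b_{\operatorname{coh}}(R)$ is then exactly \cite[Theorem 1.1]{Lank/Olander:2024} (equivalently \cite[Theorem 7]{Opperman/Stovicek:2012}) — a genuine theorem, not a formal consequence of truncation. So your argument is, in effect, an attempted inline re-proof of that theorem, and the sketch elides its hard point. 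Two repairs are available: (i) follow the paper — restrict to perfect $M$ and cite the approximation theorem; or (ii) salvage your approach by truncating at a strictly decreasing sequence of degrees $c_1>c_2>\cdots>c_\ell$ along the $\ell$ stages of the build, with gaps $c_{i-1}-c_i$ exceeding twice the shift-amplitude of the building data: then each obstruction lies in $\operatorname{Hom}$ from objects in degrees $\le c_i+C_0$ into a build whose cohomology is bounded below by $c_{i-1}-C_0$, which does vanish, and your retraction-lifting conclusion goes through with each $\tau^{\ge c_i}N\in\langle H\rangle_{d+1}$. As written, however, the proposal has a genuine gap.
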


\begin{proof}
    If $D^b_{\operatorname{coh}}(S)$ has infinite Rouquier dimension, then there is nothing to check, so without loss of generality we may assume this value is finite. Suppose $G$ is a strong generator for $D^b_{\operatorname{coh}}(S)$ with generation time is $g$. By Theorem~\ref{thm:descent_for_coherent_algebras}, $\pi_\ast G$ is a classical generator for $D^b_{\operatorname{coh}}(R)$. Choose $n\geq 0$ such that $R\in \langle \pi_\ast G\rangle_n$. Let $P$ be in $\operatorname{perf}(R)$. Our hypothesis that $G$ is a complex of $(S,S)$-bimodules ensures that $\pi_\ast (\mathbb{L}\pi^\ast P \overset{\mathbb{L}}{\otimes}_S G)$ is isomorphic to $\pi_\ast G \overset{\mathbb{L}}{\otimes}_R P= P \overset{\mathbb{L}}{\otimes}_R S$, see Lemma~\ref{lem:noncommutative_projection_formula}. If we tensor with $P$ and use this isomorphism, then we see that $P\in \langle \pi_\ast (G\overset{\mathbb{L}}{\otimes}_S \mathbb{L}\pi^\ast P) \rangle_n$. In particular, this tells us that $\operatorname{perf}R\subseteq \langle \pi_\ast G \rangle_{n(g+1)}$. The desired claim follows from \cite[Theorem 1.1]{Lank/Olander:2024} (as well as \cite[Theorem 7]{Opperman/Stovicek:2012}).
\end{proof}

\begin{theorem}\label{thm:lower_bound_algebras}
    Let $X$ be a scheme of finite Krull dimension which is integral, Jacobson\footnote{See \cite[\href{https://stacks.math.columbia.edu/tag/01P1}{Tag 01P1}]{StacksProject}.}, catenary\footnote{See \cite[\href{https://stacks.math.columbia.edu/tag/02IV}{Tag 02IV}]{StacksProject}.}, $J\textrm{-}2$, and Noetherian. If $\mathcal{A}$ is a coherent $\mathcal{O}_X$-algebra with full support, then the Rouquier dimension of $D^b_{\operatorname{coh}}(\mathcal{A})$ is at least the Krull dimension of $X$.
\end{theorem}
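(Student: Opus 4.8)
The plan is to reduce the global statement to a homological computation at a single well-chosen point and then run a ghost-map argument. First I would produce a closed point $x \in X$ at which $R := \mathcal{O}_{X,x}$ is a regular local ring of dimension exactly $d := \dim X$. Such a point exists precisely because of the dimension-theoretic hypotheses: the regular locus is open by $J\textrm{-}2$ (and nonempty, since $X$ is integral, so it contains the generic point), while $X$ being Jacobson, catenary, and of finite Krull dimension $d$ forces a closed point of full codimension inside any dense open (the minimal member of a maximal chain of irreducible closed subsets is a closed point, and Jacobsonness identifies closed points of the open regular locus with closed points of $X$). Since $\mathcal{A}$ has full support, the stalk $S := \mathcal{A}_x$ is a nonzero module-finite $R$-algebra whose structure map places $R$ inside the center $Z(S)$, and $\bar{S} := S \otimes_R k(x) \neq 0$ by Nakayama.

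Next I would pass from $D^b_{\operatorname{coh}}(\mathcal{A})$ to $D^b_{\operatorname{coh}}(S)$. Localizing coherent $\mathcal{A}$-modules at $x$ is an exact localization of abelian categories whose essential image is all finitely generated $S$-modules (a finite $S$-module is the stalk of a coherent $\mathcal{A}$-module obtained by spreading out a finite presentation over a neighborhood of $x$), so it induces an exact, essentially surjective functor $D^b_{\operatorname{coh}}(\mathcal{A}) \to D^b_{\operatorname{coh}}(S)$ which is a Verdier quotient up to direct summands. A quotient functor cannot increase Rouquier dimension, since the image of a strong generator is a strong generator of no larger generation time; hence $\dim D^b_{\operatorname{coh}}(S) \leq \dim D^b_{\operatorname{coh}}(\mathcal{A})$, and it suffices to prove $\dim D^b_{\operatorname{coh}}(S) \geq d$.

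For the local bound I would invoke the ghost lemma: a map $f\colon M \to N$ is $G$-ghost if $\operatorname{Hom}(G[j],M) \to \operatorname{Hom}(G[j],N)$ vanishes for all $j$, and if a composite of $n$ consecutive $G$-ghost maps out of an object is nonzero, then that object does not lie in $\langle G \rangle_n$, so the generation time of $G$ is at least $n$. Fixing an arbitrary strong generator $G$ of $D^b_{\operatorname{coh}}(S)$, I would build a nonzero composite of $d$ $G$-ghost maps from the Koszul complex on a regular system of parameters $x_1, \dots, x_d$ of $R$; because these parameters are central in $S$, the complex $S \otimes_R K^R(x_1,\dots,x_d)$ is a complex of $S$-modules and its syzygies and connecting maps are $S$-linear. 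The nonvanishing of the length-$d$ composite, evaluated against a simple finite-length $S$-module $T$ (which exists since $\bar{S} \neq 0$), reduces to the nonvanishing of the top class of the exterior algebra $\operatorname{Ext}^\ast_R(k(x),k(x))$ on $d$ generators; this input is characteristic-free and therefore covers the mixed-characteristic examples (e.g. algebras over $\mathbb{Z}$). Taking the infimum over all $G$ then yields $\dim D^b_{\operatorname{coh}}(S) \geq d$.

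The main obstacle is the last step: for a fixed but arbitrary generator $G$ the object witnessing level $\geq d+1$ genuinely depends on $G$, and one must arrange the $d$ maps to be simultaneously $G$-ghost while keeping their composite nonzero. This is the noncommutative analogue of Rouquier's local computation in \cite[Proposition 7.16]{Rouquier:2008}; the key structural point is that centrality of $x_1, \dots, x_d$ lets the commutative Koszul machinery over the regular ring $R$ drive the $S$-linear ghost construction, and working at a regular point of full dimension makes the decisive $\operatorname{Ext}$ nonvanishing independent of the residue characteristic. The remaining checks are routine by comparison: essential surjectivity of the localization (done above), that fullness of the support is exactly what guarantees $\bar{S}\neq 0$ and hence the Koszul complex has nonzero homology at $x$, and the bookkeeping translating "nonzero composite of $d$ ghosts" into the stated lower bound on $\dim D^b_{\operatorname{coh}}(\mathcal{A})$.
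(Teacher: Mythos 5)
Your reduction to the stalk category is the unproblematic part of your plan, but it is also counterproductive, and the step you yourself flag as ``the main obstacle'' is a genuine gap rather than a routine check --- it is, in fact, the entire content of the theorem. Once you fix the closed point $x$ \emph{before} the generator, you must show that \emph{every} strong generator $G$ of $D^b_{\operatorname{coh}}(S)$, $S=\mathcal{A}_x$, has generation time at least $d$. Your Koszul/ghost construction cannot deliver this, because ghostness is a condition relative to $G$ and no witness chosen independently of $G$ can survive: if your witness is a nonzero composite of $d$ many $G$-ghost maps out of $S\otimes_R K^R(x_1,\dots,x_d)$, replace $G$ by the strong generator $G'=G\oplus\bigl(S\otimes_R K^R(x_1,\dots,x_d)\bigr)$; the Koszul complex then lies in $\langle G'\rangle_1$, so by the ghost lemma \emph{every} $G'$-ghost map out of it is zero and your composite vanishes. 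Thus the witness and the ghost maps must be manufactured from $G$ itself, and neither centrality of the $x_i$ nor the characteristic-free nonvanishing in $\operatorname{Ext}^\ast_R(k,k)$ provides a mechanism for that. Worse, the local statement you have reduced to is not available even in the simplest commutative case $S=R$ a regular local ring of dimension $\geq 2$: a local ring of positive dimension is never Jacobson, so such rings lie outside the scope of \cite[Proposition 7.16]{Rouquier:2008}, outside the scope of Theorem~\ref{thm:lower_bound_algebras} itself, and outside every known lower-bound technique, all of which need a dense supply of closed points. Your strategy therefore trades the theorem for a strictly harder (open) problem.

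The order of quantifiers is exactly where the paper's proof differs, and it is the crux. The paper fixes a strong generator $G$ of minimal generation time \emph{first}, pushes it to the commutative side as $\pi_\ast G$, and only then chooses the point: by semicontinuity (\cite[Lemma 3.1]{Dey/Lank:2024}, \cite[Proposition 3.5]{Letz:2021}) the locus of points $p$ where $(\pi_\ast G)_p$ and $\mathcal{O}_{X,p}$ finitely build one another in at most one cone is open, and it is nonempty because at the generic point $(\pi_\ast G)_\eta$ is a nonzero bounded complex of vector spaces. Inside this $G$-dependent locus one picks an affine open $V$ and a closed point $p$ with $\dim\mathcal{O}_{V,p}=\dim X$ --- this is precisely what the Jacobson, catenary, and integral hypotheses are for --- after which levels against $j^\ast\pi_\ast G$ can be traded for levels against $\mathcal{O}_V$, reducing everything to the classical computation $\operatorname{level}^{\mathcal{O}_{V,p}}(\kappa(p))=\dim X+1$; the Verdier localization of Remark~\ref{rmk:nc_verdier_localization} is used to lift the witness $s_\ast s^\ast j^\ast\mathcal{A}$ to an object $Q$ of $D^b_{\operatorname{coh}}(\mathcal{A})$, and exactness of $j^\ast$ and $\pi'_\ast$ transports the level bound back to $G$. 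If you want to salvage your write-up, swap the quantifiers: do not localize at a preassigned point, but, for the given $G$, use the openness results above to find the open set on which $\pi_\ast G$ is interchangeable with the structure sheaf, and only then choose a regular closed point of full codimension inside it.
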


\begin{proof}
    We know that $D^b_{\operatorname{coh}}(\mathcal{A})$ admits a classical generator, see \cite[Theorem 4.15]{Elagin/Lunts/Schnurer:2020}. If $D^b_{\operatorname{coh}}(\mathcal{A})$ has no strong generators, then there is nothing to check, so we can assume $D^b_{\operatorname{coh}}(\mathcal{A})$ admits a strong generator. Suppose $G$ is a strong generator for $D^b_{\operatorname{coh}}(\mathcal{A})$ with minimal generation time. The regular locus of $X$ is nonempty and open, so we can find an affine open immersion $i \colon U \to X$ where $U$ is nonempty. The locus of points $p$ where both $(\pi_\ast G)_p$ and $\mathcal{O}_{X,p}$ finitely build one another in at most one cone in $D^b_{\operatorname{coh}}(\mathcal{O}_{X,p})$ is open in $U$, see \cite[Lemma 3.1]{Dey/Lank:2024} or \cite[Proposition 3.5]{Letz:2021}. 

    Let $V$ be an affine open contained in the intersection of $U$ and such points.\footnote{This is nonempty because both objects do not vanish at the generic point.} Denote by $j\colon V \to X$ the open immersion. It follows that both $j^\ast \pi_\ast G$ and $\mathcal{O}_V$ finitely build one another in one cone in $D^b_{\operatorname{coh}}(V)$, see \cite[Corollary 3.4]{Letz:2021}. Note $V$ is contained in the regular locus of $X$. This tells us $V$ is a affine regular scheme of finite Krull dimension, so $\mathcal{O}_V$ is a strong generator whose generation time is equal to $\dim V$, see \cite[Corollary 4.3.13]{Letz:2020} or \cite[$\S 8.2$]{Christensen:1998}. Hence, $j^\ast \pi_\ast G$ has generation time coinciding with $\dim V$ in $D^b_{\operatorname{coh}}(V)$. 
    
    The hypothesis on $X$ tells us the Krull dimension of any nonempty affine open subscheme of $X$ coincides with that of $X$, see \cite[\href{https://stacks.math.columbia.edu/tag/0DRT}{Tag 0DRT}]{StacksProject}. This ensures that $j^\ast \pi_\ast G$ is a strong generator for $D^b_{\operatorname{coh}}(V)$ with generation time being $\dim X$. Choose a closed point $p$ in $V$ such that $\dim \mathcal{O}_{V,p}=\dim X$. As $V$ is regular, it follows that $\operatorname{level}^{\mathcal{O}_{V,p}}(\kappa(p)) = \dim X + 1$ see \cite[Corollary 4.3.13]{Letz:2020} coupled with \cite[\href{https://stacks.math.columbia.edu/tag/00OB}{Tag 00OB}]{StacksProject}. Let $s\colon \operatorname{Spec}(\kappa(p)) \to V$ be the closed immersion. There is a commutative diagram of exact functors:
    \begin{displaymath}
        \begin{tikzcd}
            {D^b_{\operatorname{coh}}(\kappa(p))} & {D^b_{\operatorname{coh}}(X)} \\
            {D^b_{\operatorname{coh}}(s^\ast j^\ast \mathcal{A})} & {D^b_{\operatorname{coh}}(j^\ast \mathcal{A})}.
            \arrow["{s_\ast}", from=1-1, to=1-2]
            \arrow["{s_\ast}", from=2-1, to=2-2]
            \arrow["{\pi_\ast^\prime}", from=2-2, to=1-2]
            \arrow["{\theta_\ast}", from=2-1, to=1-1]
        \end{tikzcd}
    \end{displaymath}
    where $\theta\colon \mathcal{O}_{\operatorname{Spec}}(\kappa(p)) \to s^\ast j^\ast \mathcal{A}$ and $\pi^\prime \colon \mathcal{O}_V \to j^\ast \mathcal{A}$ are the structure morphisms. 
    
    We see that $\theta_\ast s^\ast j^\ast \mathcal{A}$ is isomorphic to a complex of the form $\bigoplus_{n\in \mathbb{Z}} \mathcal{O}_{\operatorname{Spec}}(\kappa(p))^{\oplus r_n} [n]$. Recall that $j^\ast \colon D^b_{\operatorname{coh}}(\mathcal{A}) \to D^b_{\operatorname{coh}}(j^\ast \mathcal{A})$ is a Verdier localization, see Remark~\ref{rmk:nc_verdier_localization}. Suppose $Q$ is an object of $D^b_{\operatorname{coh}}(\mathcal{A})$ such that $j^\ast Q$ is isomorphic to $s_\ast s^\ast j^\ast \mathcal{A}$ in $D^b_{\operatorname{coh}}(j^\ast \mathcal{A})$. The following diagram commutes up to a natural isomorphism, see proof of \cite[Proposition 3.1]{DeDeyn/Lank/ManaliRahul:2024a}:
    \begin{displaymath}
        \begin{tikzcd}
            {D^b_{\operatorname{coh}}(j^\ast \mathcal{A})} & {D^b_{\operatorname{coh}}(U)} \\
            {D^b_{\operatorname{coh}}(\mathcal{A})} & {D^b_{\operatorname{coh}}(X)}.
            \arrow["{\pi^\prime_\ast}", from=1-1, to=1-2]
            \arrow["{\pi_\ast}", from=2-1, to=2-2]
            \arrow["{j^\ast}", from=2-2, to=1-2]
            \arrow["{j^\ast}", from=2-1, to=1-1]
        \end{tikzcd}
    \end{displaymath}
    This tells us that $\pi^\prime_\ast j^\ast G$ is isomorphic to $j^\ast \pi_\ast G$ in $D^b_{\operatorname{coh}}(V)$. Tying together these observations gives us the following string of inequalities: 
    \begin{displaymath}
        \begin{aligned}
            \dim D^b_{\operatorname{coh}}(\mathcal{A}) + 1 &= \operatorname{gen.time}(G) + 1 
            \\&\geq \operatorname{level}^G (Q) 
            \\&\geq \operatorname{level}^{j^\ast G} (j^\ast Q) 
            \\&\geq \operatorname{level}^{\pi^\prime_\ast j^\ast G} (\pi^\prime_\ast j^\ast Q) 
            \\&\geq \operatorname{level}^{j^\ast \pi_\ast G} (\pi^\prime_\ast j^\ast Q)
            \\&\geq \operatorname{level}^{\mathcal{O}_V} (\mathcal{O}_{\operatorname{Spec}}(\kappa(p))) \\&\geq \operatorname{level}^{\mathcal{O}_{V,p}}(\kappa(p)) =\dim X + 1.
        \end{aligned}
    \end{displaymath} 
    This completes the poof.
\end{proof}

\bibliographystyle{alpha}
\bibliography{mainbib}

\end{document}